     \def\swappedhead@plain#1#2#3{%
       \thmnumber{(\textup{#2})}
       \thmname{\@ifnotempty{#2}{~}\textup{#1}}
       \thmnote{ {\textup{(#3)}}}}
     \let\swappedhead\swappedhead@plain
   \theoremstyle{plain}
   \newtheorem{theo}[equation]{Theorem}
   \newtheorem{prop}[equation]{Proposition}
   \newtheorem{lemm}[equation]{Lemma}
   \newtheorem{coro}[equation]{Corollary}
   \theoremstyle{definition}
   \newtheorem{defi}[equation]{Definition}
   \newtheorem{exem}[equation]{Example}
   \newtheorem{rema}[equation]{Remark}
   \newtheorem{rems}[equation]{Remarks}
   \newtheorem{scho}[equation]{Scholium}
   \numberwithin{equation}{section}
   \let\dotlessi\i 
   \let\polishl\l
   \let\norwegiano\o
   \let\russianbreve\u
  \DeclareMathSymbol{A}{\mathalpha}{operators}{`A}
  \DeclareMathSymbol{B}{\mathalpha}{operators}{`B}
  \DeclareMathSymbol{C}{\mathalpha}{operators}{`C}
  \DeclareMathSymbol{D}{\mathalpha}{operators}{`D}
  \DeclareMathSymbol{E}{\mathalpha}{operators}{`E}
  \DeclareMathSymbol{F}{\mathalpha}{operators}{`F}
  \DeclareMathSymbol{G}{\mathalpha}{operators}{`G}
  \DeclareMathSymbol{H}{\mathalpha}{operators}{`H}
  \DeclareMathSymbol{I}{\mathalpha}{operators}{`I}
  \DeclareMathSymbol{J}{\mathalpha}{operators}{`J}
  \DeclareMathSymbol{K}{\mathalpha}{operators}{`K}
  \DeclareMathSymbol{L}{\mathalpha}{operators}{`L}
  \DeclareMathSymbol{M}{\mathalpha}{operators}{`M}
  \DeclareMathSymbol{N}{\mathalpha}{operators}{`N}
  \DeclareMathSymbol{O}{\mathalpha}{operators}{`O}
  \DeclareMathSymbol{P}{\mathalpha}{operators}{`P}
  \DeclareMathSymbol{Q}{\mathalpha}{operators}{`Q}
  \DeclareMathSymbol{R}{\mathalpha}{operators}{`R}
  \DeclareMathSymbol{S}{\mathalpha}{operators}{`S}
  \DeclareMathSymbol{T}{\mathalpha}{operators}{`T}
  \DeclareMathSymbol{U}{\mathalpha}{operators}{`U}
  \DeclareMathSymbol{V}{\mathalpha}{operators}{`V}
  \DeclareMathSymbol{W}{\mathalpha}{operators}{`W}
  \DeclareMathSymbol{X}{\mathalpha}{operators}{`X}
  \DeclareMathSymbol{Y}{\mathalpha}{operators}{`Y}
  \DeclareMathSymbol{Z}{\mathalpha}{operators}{`Z}
  \newcommand\CC{{\mathbf C}}        
  \newcommand\RR{{\mathbf R}}        
  \newcommand\SO[1]{\operatorname{SO}_{#1}}
  \newcommand\TT{{\mathbf T}}        
  \newcommand\ZZ{{\mathbf Z}}        
  \newcommand\Lie{\mathfrak}
  \newcommand\LG{{\Lie{g}}}
  \newcommand\LH{{\Lie{h}}}
  \newcommand\LT{{\Lie{t}}}
\let\Im\relax
\DeclareMathOperator\ann{ann}                                
\DeclareMathOperator\Hom{Hom}
\DeclareMathOperator\Im{Im}
\DeclareMathOperator\Ker{Ker}
\DeclareMathOperator\Res{Res}
  \newcommand\e[1]{{\mathrm e^{\hspace{.06em}#1}}}           
\renewcommand\i{{\mathrm i}}                                 
\renewcommand\j{{\mathrm j}}
  \newcommand\IND[3]{\smash{\operatorname{Ind}_{#1}^{#2}#3}}
  \newcommand\inv{^{-1}}                                     
\renewcommand\l[1]{_{\ell,#1}}
  \newcommand\subh{_{\,\smash{|}\LH}}
  \newcommand\<{\langle}                                     
\renewcommand\>{\rangle}                                     
\DeclareSymbolFont{lettersA}{U}{txmia}{m}{it}
\DeclareMathSymbol{\thetaup}{\mathord}{lettersA}{18}  
\newtheorem{exes}[equation]{Examples}
\newtheorem{s-intertwiner}[equation]{Example: Symplectic intertwiner space {\cite[§6]{Guillemin:1982}}}
\newtheorem{s-induced}[equation]{Example: Induced Hamiltonian G-space \cite{Kazhdan:1978,Weinstein:1978}}
\newtheorem{p-intertwiner}[equation]{Example: Prequantum intertwiner space {\cite[8.1]{Ratiu:2022}}}
\newtheorem{p-induced}[equation]{Example: Induced prequantum G-space {\cite[6.2]{Ratiu:2022}}}
\newtheorem{Peter-Weyl}[equation]{Example: $\mathbf{L^2(G)}$}
\newtheorem{spherical-harmonics}[equation]{Example: $\mathbf{L^2(S^2)}$}
\newtheorem{s-induced-from-0}[equation]{Example: $\mathbf{\textbf{Ind}_H^G\,\{0\}=T^*(G/H)}$}
  \newcommand\form{\Omega}
  \newcommand\level{C}
  \newcommand\subtilde[2]{_{\smash{\tilde{#1}_{#2}}}}
  \newcommand\varpired{\varpi_{\smash{\tilde X}/\!\!/G}}
  \newcommand{\partialflip}[2]{\reflectbox{$#1{#2}$}}
\renewcommand{\partial}{{\mathpalette\partialflip6}}
\renewcommand{\to}{\varto}
\renewcommand{\mapsto}{\varmapsto}
\renewcommand{\hookrightarrow}{\varhookrightarrow}
\renewcommand{\longhookrightarrow}{\varlonghookrightarrow}
\DeclareMathOperator\closure{cl}
\DeclareMathOperator\id{id}
\DeclareMathOperator\interior{int}
\DeclareMathOperator\Liouv{Liouv}
\begin{document}

\title[Diffeological Frobenius Reciprocity]{Remarks on Diffeological Frobenius Reciprocity}

\author{Gabriele Barbieri}
\address{Dipartimento di Matematica e Applicazioni, Universit\`a di Milano-Bicocca, Via Cozzi 55, 20125 Milano, Italy; and Dipartimento di Matematica ``Felice Casorati'', Universit\`a di Pavia, Via Ferrata 5, 27100 Pavia, Italy}
\email{gabriele.barbieri01@universitadipavia.it}

\author{Jordan Watts}
\address{Department of Mathematics, Central Michigan University, Mount Pleasant, MI 48859, USA}
\email{jordan.watts@cmich.edu}

\author{François Ziegler}
\address{Department of Mathematical Sciences, Georgia Southern University, Statesboro, GA 30460-8093, USA}
\email{fziegler@georgiasouthern.edu}

\date{June 2, 2025}
\subjclass[2020]{Primary 53D20; Secondary 53D10, 53D50, 58A10, 58A40, 22D30}

\begin{abstract}
A recent paper \cite{Ratiu:2022} established ``Frobenius reciprocity'' as a bijection $t$ between certain symplectically reduced spaces (which need not be manifolds), and conjectured: 1$^\circ$) $t$ is a diffeomorphism when these spaces are endowed with their natural subquotient diffeologies, 2$^\circ$) $t$ respects the reduced diffeological $2$-forms they may (or might not) carry. In this paper, we prove both this conjecture and a similar one on prequantum reduction, and also give new sufficient conditions for the reduced forms to exist. We stop short of proving that they always exist.
\end{abstract}

\maketitle

\section*{Introduction}

Let $G$ be a Lie group with closed subgroup $H$.  There is a well-known correspondence between the representation theory of $G$ and the Hamiltonian geometry of $G$-spaces.  Constructions such as the induced representation functor $\mathrm{Ind_H^G}$ and the space of intertwiners $\mathrm{Hom_G}$ on the representation theory side have geometric analogues defined using \emph{symplectic reduction}: see \cite{Kazhdan:1978, Weinstein:1978, Guillemin:1982, Guillemin:1983} or (\ref{s-intertwiner},\,\ref{s-induced}) below. Natural properties are expected to hold: thus for instance, given a Hamiltonian $G$-space $X$ and a Hamiltonian $H$-space~$Y$, the paper \cite{Ratiu:2022} establishes ``Frobenius reciprocity'' as a bijection
\begin{equation}
	\label{t}
	t : \Hom_G(X,\IND HGY) \to \Hom_H(\Res^G_HX,Y)
\end{equation}
where $\Res^G_H$ means restriction. But of course one expects more than a \mbox{bijection}: in favorable cases, both sides of \eqref{t} are symplectic manifolds, and so $t$ should be a symplectomorphism. Then again, in general both sides are \emph{singular} reduced spaces, so the structures that $t$ should respect elude traditional differential geometry.

\emph{Diffeology} is a lightweight framework, introduced in the 1980s by Souriau and his students, to make sense of differential geometrical notions on singular (or also infinite-dimensional) spaces in a uniform way, unshackled from the often artificial constraints of topology or functional analysis. Thus for instance, the reduced spaces in \eqref{t} come canonically equipped with (``subquotient'') diffeologies and an attendant Cartan--de Rham calculus of (diffeological) differential forms \cite{Souriau:1985a, Souriau:1988, Iglesias-Zemmour:2013}. This allowed \cite{Ratiu:2022} to conjecture that 1$^\circ$) $t$ is a diffeomorphism, and 2$^\circ$) $t$ respects the reduced 2-forms that the two sides of \eqref{t} may carry.

In Part I of this paper, we prove this conjecture (Theorem \ref{symplectic_frobenius}) as well as a similar one on prequantum $G$-spaces (Theorem \ref{prequantum_frobenius}). We do so \emph{agnostically} as to the existence of reduced forms, i.e.~we show: if one side of \eqref{t} carries a reduced form, then so does the other, and the diffeomorphism $t$ maps one to the other. In a concluding, largely expository section requested by the referee, we describe and exemplify known and further expected relations of these results to the representation-theoretic ancestor (Theorem~\ref{unitary_frobenius}) which motivated them.

In Part II we offer, for the first time in a long time, new sufficient conditions for a reduced space $\Phi\inv(0)/G$ to carry a reduced form; see (\ref{reduced_2-form}, \ref{reduced_1-form}) for precise definitions. To our knowledge, this question was first addressed diffeologically in \cite{Watts:2012, Karshon:2016}, which imply the existence of reduced forms whenever the action of $G$ (or its identity component) on $\Phi\inv(0)$ is locally free \emph{and} proper; see (\ref{two_settings},~\ref{two_prequantum_settings}). Briefly, we show that locally free \emph{or} proper suffices, and we add the third option that the action be \emph{strict}. In more detail:

• Strict actions (Theorem \ref{reduced_forms_for_strict_actions}) are a slight generalization of Iglesias' notion of `principal' action \cite[3.3.1]{Iglesias:1985}, and just what is needed to be able to directly apply Souriau's criterion for integral invariants \cite[2.5c]{Souriau:1985a}.

• For locally free actions (Theorem \ref{reduced_forms_at_regular_values}), we assume $G$ is connected. Our result is then an easy consequence of a theorem on foliations by Hector, Macías-Virgós and Sanmartín-Carbón \cite{Hector:2011}, recently proved anew in \cite[5.10]{Miyamoto:2023}. In the latter paper, Miyamoto makes inroads on a generalization to singular foliations: that may be the most promising route to obtaining reduced forms in greater generality than we were able to.  In the unpublished Appendix of \cite{Lin:2023}, Theorem B.9 suggests that we can drop the connectedness assumption on $G$.

• For proper actions (Theorem \ref{reduced_forms_for_proper_actions}) we rely heavily on the Sjamaar--Lerman--Bates theory of stratified reduction. In effect, we show that $\Phi\inv(0)/G$ carries a \emph{global}, diffeological reduced form which induces theirs on each stratum. The interesting question whether all `Sjamaar forms' \cite{Sjamaar:2005} similarly globalize was discussed in \cite[3.41]{Watts:2012}, on which our proof is based. Also of future interest is the question whether the strata themselves are diffeological `Klein strata' \cite{Gurer:2023}. Note that if $G$ is compact, all actions are proper, so reduced forms always exist, and \eqref{t} is unconditionally a \emph{parasymplectomorphism} (see \ref{two_settings}).

In a final \emph{applications} section, we show that $\IND HGY$ carries a reduced form also when $H$~is not closed (Corollary \ref{reduced_form_on_induced_spaces}), and we obtain a Kummer--Marsden--Satzer isomorphism, $(T^*G)/\!\!/H = T^*(G/H)$, whenever $H$ is dense in $G$ --- e.g. when $G$ is the 2-torus and $H$ an irrational winding (Theorem~\ref{diffeological_KMS}).

\section*{Notation, conventions, and background}

This paper adopts \cite{Ratiu:2022}'s concise notation for the translation of tangent and cotangent vectors to a Lie group: for fixed $g,q\in G$,
\begin{equation}
	\label{concise}
	\gathered T_qG             \\\vspace{-3pt}       v       \endgathered
	\gathered  \,\to\,        \\\vspace{-3pt}   \,\mapsto\,  \endgathered
	\gathered T_{gq}G         \\\vspace{-3pt}      gv,       \endgathered
	\quad\qquad\text{resp.}\qquad\quad
	\gathered T^*_qG         \\\vspace{-3pt}       p         \endgathered
	\gathered  \,\to\,        \\\vspace{-3pt}  \,\mapsto\,   \endgathered
	\gathered T^*_{gq}G     \\\vspace{-3pt}      gp          \endgathered
\end{equation}
will denote the derivative of $q\mapsto gq$, respectively its contragredient, so that $\<gp,v\>=\<p,g\inv v\>$. Likewise we define $vg$ and $pg$ with 
$\<pg,v\>=\<p,vg\inv\>$. We resolve any operator precedence ambiguity using a lower dot, as in e.g.~\eqref{right_invariant_form}.

We take a \emph{Hamiltonian $G$-space} to mean the triple $(X, \omega, \Phi)$ of a manifold $X$ on which $G$ acts, a $G$-invariant symplectic form $\omega$ on $X$, and an $\text{Ad}^*$-equivariant moment map $\Phi:X\to\LG^*$. Our convention is $\i_{Z_X}\omega = - d\<\Phi(\cdot),Z\>$ with $Z_X(x)=\left.\frac d{dt}\e{tZ}(x)\right|_{t=0}$, and we recall the cardinal consequences \cite{Marsden:1974, Smale:1970}
\begin{samepage}
	\begin{equation}
	   \label{cardinal}
	   \text{(a) }\Ker(D\Phi(x)) = \LG(x)^\omega
	   \qquad\quad
	   \text{(b) }\Im (D\Phi(x)) = \ann(\LG_x).
	\end{equation}
	The first is the orthogonal relative to $\omega$ of the tangent space $\LG(x)$ 
	to the orbit $G(x)$; the second is the annihilator in $\LG^*$ of the stabilizer 
	Lie subalgebra $\LG_x\subset\LG$.
\end{samepage}

Regarding diffeology, we shall rely throughout on \cite{Souriau:1985a,Iglesias-Zemmour:2013} which the reader is assumed to have at hand; thus we assume familiarity with such notions as \emph{plot}, \emph{smooth map}, \emph{induction}, \emph{subduction}, \emph{subset diffeology}, \emph{quotient diffeology}. In addition we will make repeated use of the following unpublished observation from \cite[1.2.18]{Iglesias:1985} and \cite[2.10]{Watts:2012}:

\begin{prop}
	\label{restricted_subduction}
	Let $s:X\to Y$ be a subduction between diffeological spaces. Endow some $B\subset Y$ and its preimage $A=s\inv(B)$ with their subset diffeologies. Then $s_{|A}:A\to B$ is also a subduction.
\end{prop}

\begin{proof}
	For $A$ and $B$ to have subset diffeologies means that $A\overset{i}{\hookrightarrow}X$ and $B\overset{j}{\hookrightarrow}Y$ are inductions, hence in particular smooth. Therefore $s\circ i$ $(=j\circ s_{|A})$ is smooth,
	\begin{equation}
		\label{subquotient}
		\begin{tikzcd}[row sep=large,column sep=large,every label/.append style={font=\small}]
			&
			& A
			  \ar[r,hook,"i"]
			  \ar[d,swap,"s_{|A}"]
			& X
			  \ar[d,"s"]
			\\
			  u\in V
			  \ar[r,dashed,hook]
			  \ar[urr,dashed,"Q"]
			& U
			  \ar[r,"P"]
			& B
			  \ar[r,hook,"j"]
			& Y\rlap{,}
		\end{tikzcd}
	\end{equation}
	whence $s_{|A}$ is smooth by the universal property of inductions \cite[1.34]{Iglesias-Zemmour:2013}. Now to say that $s_{|A}$ is a \emph{subduction} means: given a plot $P:U\to B$ and $u\in U$, there are an open $V\ni u$ and a plot (``local lift'') $Q:V\to A$ such that \eqref{subquotient} commutes \cite[\nolinebreak 1.48]{Iglesias-Zemmour:2013}. To apply this, note that by the same token, $s$ being a subduction and $j\circ P$ a plot of $Y$ give us $V$ and a plot $R:V\to X$ such that $j\circ P_{|V} = s\circ R$. Moreover $j\circ P$ taking values in $B$ implies that $R$ takes values in $A$; so we have $R=i\circ Q$ for a map $Q$ as indicated, which is smooth by the same universal property. Now $j\circ P_{|V} = j\circ s_{|A}\circ Q$, whence (as desired) $P_{|V}=s_{|A}\circ Q$.
\end{proof}

\part{Frobenius reciprocity}
\section{Reduced Hamiltonian $G$-spaces and 2-forms}
Let $G$ be a Lie group and $(X,\omega,\Phi)$ a Hamiltonian $G$-space. While the resulting \emph{reduced space}
\begin{equation}
	\label{reduced_space}
	X/\!\!/G := \Phi\inv(0)/G
\end{equation}
need not be a manifold, we may (and will) endow it with the \emph{subquotient diffeology}: first put on $\Phi\inv(0)$ the subset diffeology, and then on \eqref{reduced_space} the quotient diffeology \cite[1.33,\,1.50]{Iglesias-Zemmour:2013}. (By \eqref{restricted_subduction} it is equivalent to first put on $X/G$ the quotient diffeology, and then on \eqref{reduced_space} the subset diffeology of~that.) So we have a ready notion of differential form on $X/\!\!/G$ \cite[6.28]{Iglesias-Zemmour:2013}, and the question naturally arises whether (or when) it carries a reduced 2-form:

\begin{samepage}
	\begin{defi}
		\label{reduced_2-form}
		We say that $X/\!\!/G$ \emph{carries a reduced $2$-form}, if there is on it a (diffeological) 2-form $\omega_{X/\!\!/G}$ such that $j^*\omega=\pi^*\omega_{X/\!\!/G}$, where $j$ and $\pi$ are the natural maps in
	\begin{equation}
		\label{reduction_diagram}
	    \begin{tikzcd}
	       \Phi\inv(0) \rar[hook]{j}\dar[swap]{\pi} & X \\
	       X/\!\!/G\rlap{.}
	    \end{tikzcd}
	\end{equation}
	We note that if $\omega_{X/\!\!/G}$ exists, then it is unique (and closed) by \cite[6.39]{Iglesias-Zemmour:2013}.
	\end{defi}
\end{samepage}

\begin{rems}
	\label{two_settings}
	We regard the existence of $\omega_{X/\!\!/G}$ as an important open problem in diffeology, so far only solved, to our knowledge, in two familiar settings. First, if the $G$-action on $\Phi\inv(0)$ is \emph{free and proper}, then \eqref{reduction_diagram} consists of manifolds, and $\omega_{X/\!\!/G}$ exists as an ordinary (hence also diffeological:~\cite[2.14]{Karshon:2016}) 2-form. This is the Marsden--Weinstein setting \cite{Marsden:1974}. Secondly, if that action\- is \emph{still proper} but only \emph{locally free} (i.e., all infinitesimal stabilizers $\LG_x$ are zero), then $X/\!\!/G$ is an (effective) orbifold and carries an `orbifold 2-form': \cite[p.\,337]{Cushman:1997}; note that $\Phi\inv(0)$ is still a manifold, as (\ref{cardinal}b) still ensures that $0$ is a regular value of $\Phi$. Now as proved in \cite[App.\,A]{Karshon:2016}, when orbifolds are regarded as diffeological spaces, `orbifold forms' define diffeological forms and conversely. So $\omega_{X/\!\!/G}$ exists also in this orbifold setting.

Note that whereas the Marsden--Weinstein 2-form is \emph{symplectic}, there is no consensus beyond their setting on when to call a closed 2-form (or diffeological space) symplectic: see the discussion in \cite[p.\,1310]{Iglesias-Zemmour:2016}, which we shall follow in calling $X/\!\!/G$ merely \emph{parasymplectic} when $\omega_{X/\!\!/G}$ exists.
\end{rems}

In this paper we are mainly concerned with the following two reductions:

\begin{s-intertwiner}
	\label{s-intertwiner}
	This is
	\begin{equation}
	   \label{symplectic_hom}
		   \begin{aligned}
			   \Hom_G(X_1,X_2)&:=(X_1^-\times X_2^{\vphantom-})/\!\!/G\\
			   &\phantom{:}=\Phi\inv(0)/G,
		   \end{aligned}
	   	\end{equation}
	where $(X_i,\omega_i,\Phi_i)$ are Hamiltonian $G$-spaces and $X_1^-$ is short for $(X_1,-\omega_1,-\Phi_1)$; so the product here has $G$-action $g(x_1,x_2)=(g(x_1),g(x_2))$, 2-form $\omega_2-\omega_1$ and moment map $\Phi(x_1,x_2)=\Phi_2(x_2)-\Phi_1(x_1)$. Note that \eqref{symplectic_hom} boils down to $X_2/\!\!/G$ when $(X_1,\omega_1,\Phi_1)=(\{0\},0,0)$, so asking when it carries a reduced 2\nobreakdash-form includes the original problem about \eqref{reduced_space}. (More generally, \cite{Guillemin:1982} took for $X_1$ a coadjoint orbit $G(\mu)$ and observed that \eqref{symplectic_hom} then boils down to the space $\Phi_2\inv(\mu)/G_\mu$ of \cite{Marsden:1974}: this is their famous ``shifting trick''.)
\end{s-intertwiner}

\begin{s-induced}
	\label{s-induced}
	This is
	\begin{equation}
		\begin{aligned}
		   \label{induced_manifold}
		   \IND HGY&:=(T^*G\times Y)/\!\!/H\\
		   &\phantom{:}= \psi\inv(0)/H,
		\end{aligned}
	\end{equation}
	where $H\subset G$ is a closed subgroup, $(Y,\omega_Y,\Psi)$ a Hamiltonian $H$-space, and $L:=T^*G\times Y$ is the Hamiltonian $G\times H$-space with action $(g,h)(p,y)=(gph\inv, h(y))$, 2-form $\omega_L=d\varpi_{T^*G} + \omega_Y$ ($\varpi_{T^*G}$ is the canonical 1-form), and resulting moment map $\phi\times\psi:L\to\LG^*\times\LH^*$,
	\begin{equation}
	   \label{phi_and_psi}
	   \left\{
	   \begin{array}{lll}
	      \rlap{$\phi$}\phantom{\psi}(p,y) & \!\!= & \!\!pq\inv\\[.5ex]
	      \psi(p,y) & \!\!= & \!\!\Psi(y)- q\inv p\subh
	   \end{array}
	   \right.
	   \rlap{\qquad$(p\in T^*_qG)$}
	\end{equation}
	(notation \ref{concise}); recall that $\varpi_{T^*G}(\delta p) = \<p,\delta q\>$ where $\delta q\in T_qG$ is the image of $\delta p \in T_p(T^*G)$ by the derivative of the projection $\pi:T^*G \to G$. As~detailed in \cite[1.2]{Ratiu:2022} this fits the Marsden--Weinstein setting \eqref{two_settings}, so \eqref{induced_manifold} is a manifold carrying a reduced 2-form $\omega_{L/\!\!/H}$. Moreover it has a residual $G$-action and moment map $\Phi_{L/\!\!/H}$ (deduced from $\phi$), so that altogether $(\IND HGY,\omega_{L/\!\!/H},\Phi_{L/\!\!/H})$ is a new Hamiltonian $G$-space; one calls it \emph{induced by $Y$\textup, from $H$}.
	
	As observed in \cite[§2.3]{Guillemin:2005a} and \cite[6.10]{Sjamaar:2005}, important examples of induced $G$-spaces are the \emph{normal forms} of Marle--Guillemin--Sternberg, to which results like \eqref{symplectic_frobenius} will apply.
\end{s-induced}

\section{Symplectic Frobenius reciprocity}

We continue with $H$ a closed subgroup of $G$, and write $\Res^G_H$ for the restriction functor from Hamiltonian $G$- to Hamiltonian $H$-spaces, i.e., the $G$-action is restricted to $H$ and the moment map is composed with the projection $\LG^*\to\LH^*$. The following proves the conjecture of \cite[3.5]{Ratiu:2022}.

\begin{theo}
   \label{symplectic_frobenius}
   If $X$ is a Hamiltonian $G$-space and $Y$ a Hamiltonian $H$-space\textup, then we have an isomorphism of reduced spaces
   \begin{equation}\
	  \label{symplectic_frobenius_equality}
      \Hom_G(X,\IND HGY)=\Hom_H(\Res^G_HX,Y),
   \end{equation}
   i.e.~there is a \textup(diffeological\textup) diffeomorphism $t$ from left to right. Moreover\textup, if one side carries a reduced $2$-form\textup, then so does the other\textup, and $t$ relates the $2$-forms.
\end{theo}

\begin{proof}
We follow \cite[3.4]{Ratiu:2022} which obtained $t$ as a mere bijection. Write $\Phi$, $\Psi$ for the moment maps of $X$ and $Y$, and unravel the combination of \eqref{symplectic_hom} and \eqref{induced_manifold}: we get that the sides of \eqref{symplectic_frobenius_equality} are respectively $(M/\!\!/H)/\!\!/G$ and $N/\!\!/H$, where
\begin{equation}
	\label{M_and_N}
	M=X^-\times T^*G\times Y,
	\qquad\text{resp.}\qquad
	N=X^-\times Y
\end{equation}
are the Hamiltonian $G\times H$-space with action $(g,h)(x,p,y)=(g(x),gph\inv,h(y))$, 2-form $\omega_M=\omega_Y + d\varpi_{T^*G} - \omega_X$ and moment map $\phi_M\times\psi_M: M\to\LG^*\times\LH^*$, resp.~$H$\nobreakdash-space with diagonal $H$-action, 2-form $\omega_N=\omega_Y-\omega_X$ and moment map $\psi_N:N\to\LH^*$, where
   \begin{equation}
	   \label{moments_M_and_N}
	      \left\{
	      \begin{array}{rll}
	         \rlap{$\phi_M$}\phantom{\psi_M}(x,p,y) & \!\!= 
	         & \!\! pq\inv-\Phi(x)\\[.5ex]
	         \psi_M(x,p,y) & \!\!= & \!\!\Psi(y)- q\inv p\subh\\[.5ex]
	   	     \psi_N(x,y) & \!\!= & \!\!\Psi(y)-\Phi(x)\subh
	      \end{array}
	      \right.
	      \rlap{\qquad$(p\in T^*_{q}G)$.}\\
   \end{equation}
Define a submersion $r:M\to N$ by $r(x,p,y)=(q\inv(x),y)$ where $p\in T^*_{q}G$, and consider the following commutative diagram, where we have written $j_1,j_2,j_3$ and $\pi_1,\pi_2,\pi_3$ for the inclusions and projections involved in constructing reduced spaces \eqref{reduction_diagram}, $\Phi_{M/\!\!/H}$ for the moment map of the residual $G$-action on $M/\!\!/H$, and $j,\pi$ for the obvious inclusion and restriction (of $\pi_1$):
\begin{equation}
	\label{symplectic_diagram}
    \begin{tikzcd}[sep=small, arrows={line width=1.2*rule_thickness}]
       M
       \ar[rrr,"r"]
       &&&
       N
       \\
       &&
       \psi_M\inv(0)
       \ar[ull,hook',swap,"j_1" inner sep=0.2ex]
       \ar[dd,"\pi_1"]
       \\
       &&&
       (\phi_M\times\psi_M)\inv(0)
       \ar[ul,hook',swap,"j" inner sep=0.25ex]
       \ar[rrrrrrr,"s",dashed]
       \ar[dd,"\pi"]
       &&&&&&&
       \psi_N\inv(0)
       \ar[uulllllll,hook',swap,"j_3" inner sep=0.25ex]
       \ar[dddddd,"\pi_3"]
       \\
       &&
       M/\!\!/H
       \\
       &&&
       \Phi_{M/\!\!/H}\inv(0)
       \ar[ul,hook',swap,"j_2" inner sep=0.25ex]
       \ar[dddd,"\pi_2"]
       \\
       \\
       \\
       \\
       &&&
       (M/\!\!/H)/\!\!/G
       \ar[rrrrrrr,"t",dashed]
       &&&&&&&
       N/\!\!/H.
    \end{tikzcd}
\end{equation}
Also define a right inverse immersion $r':N\to M$ by $r'(x,y)=(x,\Phi(x),y)$, where we identify $\LG^*$ with the cotangent space of $G$ at the identity. As shown in \cite[\nolinebreak 3.8]{Ratiu:2022}, $r$ and $r'$ send $(\phi_M\times\psi_M)\inv(0)$ to $\psi_N\inv(0)$ and conversely; so they induce a map $s$ as indicated in \eqref{symplectic_diagram}, and a right inverse $s'$ of it. Next \cite[\nolinebreak 3.9]{Ratiu:2022} shows that $s$ sends $G\times H$-orbits to $H$-orbits; conversely $s'$ sends $H$-orbits to orbits of the diagonal $\mathrm{diag}(H)\subset G\times H$. It follows that $s$ and $s'$ descend to a bijection $t$, as indicated in \eqref{symplectic_diagram}, and its inverse $t\inv$.

To see that $t$ is smooth, note first that $r$ is of course diffeologically smooth \cite[4.3C, end of proof]{Iglesias-Zemmour:2013}. Also, by construction in \eqref{reduced_space} of the subset diffeologies on the zero levels of $\psi_M$, $\Phi_{M/\!\!/H}$, $\psi_N$ and resulting quotient diffeologies on $M/\!\!/H$, $(M/\!\!/H)/\!\!/G$ and $N/\!\!/H$, the injections $j_1,j_2,j_3$ are \emph{inductions}, hence smooth, and the projections $\pi_1,\pi_2,\pi_3$ are \emph{subductions}, hence also smooth \cite[1.29, 1.36, 1.46, 1.50]{Iglesias-Zemmour:2013}. Now give $(\phi_M\times\psi_M)\inv(0)$ its subset diffeology inside $\psi_M\inv(0)$: then $j$ is another induction, and $\pi$ is a subduction by \eqref{restricted_subduction}. In particular $r\circ j_1\circ j$ is smooth \cite[1.15]{Iglesias-Zemmour:2013}. As this is just $j_3\circ s$, it follows by the universal property of inductions \cite[1.34]{Iglesias-Zemmour:2013} that $s$ is smooth. Then $\pi_3\circ s$ is smooth, and as this is just $t\circ\pi_2\circ\pi$, it follows by the universal property of subductions \cite[1.51]{Iglesias-Zemmour:2013} that $t$ is smooth.

To see that $t\inv$ is smooth, note likewise that $r'$ is diffeologically smooth. Hence so is $r'\circ j_3$. As this is just $j_1\circ j\circ s'$, the universal property of inductions gives smoothness of $s'$. Then $\pi_2\circ\pi\circ s'$ is smooth, and as this is just $t\inv\circ\pi_3$, the universal property of subductions gives smoothness of $t\inv$. 

So $t$ is a diffeomorphism, as claimed. Next, assume that both sides of \eqref{symplectic_frobenius_equality} carry reduced 2-forms, $\omega_{(M/\!\!/H)/\!\!/G}$ and $\omega_{N/\!\!/H}$. We must prove
\begin{equation}
	\label{desired_pull_back}
	\omega_{(M/\!\!/H)/\!\!/G}=t^*\omega_{N/\!\!/H}.
\end{equation}
To this end, we claim it is enough to show, not quite $\omega_M=r^*\omega_N$ but
\begin{equation}
	\label{key_pull_back}
	j^*j_1^*\omega_M=j^*j_1^*r^*\omega_N.
\end{equation}
Indeed, by commutativity of \eqref{symplectic_diagram} and definition \eqref{reduced_2-form} of a reduced 2-form (of which $M/\!\!/H$ always carries one, since it is a Marsden--Weinstein reduction \eqref{s-induced}), we get that \eqref{key_pull_back} has left-hand side $j^*\pi_1^*\omega_{M/\!\!/H}=\pi^*j_2^*\omega_{M/\!\!/H}=\pi^*\pi_2^*\omega_{(M/\!\!/H)/\!\!/G}$ and right-hand side $s^*j_3^*\omega_N=s^*\pi_3^*\omega_{N/\!\!/H}=\pi^*\pi_2^*t^*\omega_{N/\!\!/H}$. Since pull-back $\pi^*\pi_2^*$ by the subduction $\pi_2\circ\pi$ is injective \cite[6.39]{Iglesias-Zemmour:2013}, we conclude that \eqref{key_pull_back} indeed implies \eqref{desired_pull_back}.

Now \eqref{key_pull_back} is a genuinely diffeological relation to prove, because (unlike $r,j_1$), $j$ need not be a smooth map between \emph{manifolds}. By definition \cite[6.28, 6.33]{Iglesias-Zemmour:2013}, \eqref{key_pull_back} means that its two sides coincide \emph{after pull-back by any plot}, $P$, of the (subset) diffeology of $(\phi_M\times\psi_M)\inv(0)$, i.e., $(j_1\circ j\circ P)^*\omega_M=(j_1\circ j\circ P)^*r^*\omega_N$. But then $F:=j_1\circ j\circ P$ is an ordinary smooth map $U\to M$, $U$ open in some~$\RR^n$, taking values in $(\phi_M\times\psi_M)\inv(0)$ \cite[1.33]{Iglesias-Zemmour:2013}. So \eqref{key_pull_back} will be proved if we show
\begin{equation}
	\label{auxiliary_pull_back}
	F^*\omega_M=F^*r^*\omega_N
\end{equation}
\emph{for every ordinary smooth map\textup, $F:U\to M$\textup, taking values in $(\phi_M\times\psi_M)\inv(0)$}. To this end, fix such a map $F$, write $F(u)$ as $m=(x,p,y)$, and regard also the base point $q$ of $p\in T^*_qG$ and the subsequent variables 
\begin{equation}
	\mu=pq\inv,\qquad
	\bar\mu=\Phi(x),\qquad
	\bar x=q\inv(x),\qquad
	n = r(m) = (\bar x,y)
\end{equation}
as smooth functions of $m$ and therefore $u$. Then derivatives of these functions map each vector $\delta u\in T_uU$ to vectors $\delta m$, $\delta x$, $\delta p$, $\delta y$, $\delta q$, $\delta\mu$, $\delta\bar\mu$, $\delta\bar x$, $\delta n$. Writing also $Z:=\delta q.q\inv$ ($=\Theta(\delta q)$ where $\Theta$ is a Maurer--Cartan 1-form), we~have $\delta q=Zq$ and thus
\begin{align}
	\label{action_map_derivative}
	\delta\bar x
	&=\frac{\partial[q\inv(x)]}{\partial q}(\delta q)
	+\frac{\partial[q\inv(x)]}{\partial x}(\delta x)\notag\\
	&=\Bigl.\frac d{dt}(\e{tZ}q)\inv(x)\Bigr|_{t=0} + q\inv{}_*(\delta x)\\
	&=\Bigl.\frac d{dt}\,q\inv(\e{-tZ}(x))\Bigr|_{t=0} + q\inv{}_*(\delta x)\notag\\
	&=q\inv{}_*(\delta x-Z_X(x)).\notag
\end{align}
With this in hand, we compute
\begin{align}
	\label{pull_back_computation}
	(F^*r^*\omega_N)(\delta u,\delta'u)
	&=\omega_N(\delta n,\delta'n)\notag\\
	&=\omega_Y(\delta y,\delta'y) - \omega_X(\delta\bar x,\delta'\bar x)\notag\\
	&=\omega_Y(\delta y,\delta'y)
	- \omega_X(\delta x-Z_X(x),\delta'x-Z'_X(x))\notag\\
	&=\omega_Y(\delta y,\delta'y)
	+ \omega_X(Z_X(x),\delta'x)
	- \omega_X(Z'_X(x),\delta x)\notag\\
	&\hspace{2.15cm}
	- \omega_X(Z_X(x),Z'_X(x))
	- \omega_X(\delta x,\delta'x)\notag\\
	&=\omega_Y(\delta y,\delta'y)
	+ \<\delta\bar\mu,Z'\>
	- \<\delta'\bar\mu,Z\>
	+\<\bar\mu,[Z,Z']\>\\
	&\hspace{2.15cm}
	- \omega_X(\delta x,\delta'x)\notag\\
	&=\omega_Y(\delta y,\delta'y)
	+ \<\delta\mu,Z'\>
	- \<\delta'\mu,Z\>
	+\<\mu,[Z,Z']\>\notag\\
	&\hspace{2.15cm}
	- \omega_X(\delta x,\delta'x)\notag\\
	&=\omega_Y(\delta y,\delta'y)
	+ d\varpi_{T^*G}(\delta p,\delta'p)
	- \omega_X(\delta x,\delta'x)\notag\\
	&=\omega_M(\delta m,\delta'm)\notag\\
	&=(F^*\omega_M)(\delta u,\delta'u).\notag
\end{align}
Here the third equality is by \eqref{action_map_derivative} and $G$-invariance of $\omega_X$; the fifth is because $\Phi$ is an (equivariant) moment map and by \cite[11.17$\sharp$]{Souriau:1970}; the sixth is because $F(U)\subset\phi_M\inv(0)$; and the seventh is the formula for $d\varpi_{T^*G}$ found in \cite[4.4.1]{Abraham:1978}, also obtainable by writing $\varpi_{T^*G}(\delta p)=\<pq\inv,\delta q.q\inv\>=\<\mu,\Theta(\delta q)\>$ and using the Maurer--Cartan formula $d\Theta(\delta q,\delta'q)=\delta'q\delta[q\inv] -\delta q\delta'[q\inv]=[\Theta(\delta q),\Theta(\delta'q)]$ \cite[\nolinebreak III.3.14]{Bourbaki:1972}. Thus we have proved \eqref{auxiliary_pull_back} and thereby also \eqref{key_pull_back} and \eqref{desired_pull_back}.

Finally, assume merely that \emph{one} reduced 2-form exists, $\omega_{(M/\!\!/H)/\!\!/G}$ or $\omega_{N/\!\!/H}$. Then we can \emph{define} the other by \eqref{desired_pull_back}, and a straightforward diagram chase in \eqref{symplectic_diagram} (using again \eqref{key_pull_back}, whose proof involved neither reduced 2-form) shows that we have $\pi_3^*\omega_{N/\!\!/H} = j_3^*\omega_N$ (resp.~$\pi_2^*\omega_{(M/\!\!/H)/\!\!/G} = j_2^*\omega_{M/\!\!/H}$), as desired. 
\end{proof}

\begin{rema}
	An initially simpler proof results if we replace \eqref{desired_pull_back} by $\omega_{N/\!\!/H}=(t\inv)^*\omega_{(M/\!\!/H)/\!\!/G}$ and (\ref{key_pull_back}--\ref{auxiliary_pull_back}) by $\omega_N=r'^*\omega_M$. However, the full force of \eqref{key_pull_back} is still required in the last 4 lines (after ``Finally'').
\end{rema}

\section{Reduced prequantum $G$-spaces and 1-forms}\label{prequantum}

\setlength\epigraphwidth{.56\textwidth}
\renewcommand{\epigraphsize}{\footnotesize}
\setlength\epigraphrule{0pt}
\epigraph{One might now say ``symplectic geometry is all geometry,'' but I prefer to formulate it in a more geometrical form: \emph{contact geometry is all geometry}.}{---V.~I.~Arnol'd \cite{Arnold:2000}}

\noindent
Besides Hamiltonian $G$-spaces, \cite{Ratiu:2022} makes the case that one should also induce and reduce \emph{prequantum $G$-spaces}, whose theory \cite[§18]{Souriau:1970} is recalled in \cite[\nolinebreak §5]{Ratiu:2022}. Briefly, such a space is the pair $(\tilde X,\varpi)$ of a manifold $\tilde X$ on which $G$ acts, and a $G$-invariant contact $1$-form $\varpi$ whose Reeb vector field \cite[p.\,17]{Reeb:1952a} generates a free action of the circle group $\TT=\mathrm{U}_1$. Then the \emph{prequantum moment map} $\Phi:\tilde X\to\LG^*$,
\begin{equation}
   \label{prequantum_moment}
	\<\Phi(\tilde x),Z\>=\varpi(Z_{\tilde X}(\tilde x))
	\rlap{\qquad\qquad$(Z\in \LG)$,}
\end{equation}
makes $(\tilde X,d\varpi,\Phi)$ a presymplectic Hamiltonian $G$-manifold in the sense of \cite{Lin:2019}, with null leaves the $\TT$-orbits. The $G$-action commutes with $\TT$, so it descends to the leaf space $X:=\tilde X/\TT$; likewise $d\varpi$ and $\Phi$ descend to a symplectic form $\underline\omega$ and moment map $\underline\Phi$ which make $(X,\underline\omega,\underline\Phi)$ a Hamiltonian $G$-space; one says that $\tilde X$ \emph{prequantizes} $X$.

Now as in \eqref{reduced_space} we can consider the (now \emph{prequantum}) \emph{reduced space}
\begin{equation}
	\label{prequantum_reduced_space}
	\tilde X/\!\!/G:=\Phi\inv(0)/G,
\end{equation}
endow it with the subquotient diffeology, and make the
\begin{defi}
	\label{reduced_1-form}
	We say that $\tilde X/\!\!/G$ \emph{carries a reduced $1$-form}, if it admits a (necessarily unique, diffeological) 1-form $\varpired$ such that $j^*\varpi=\pi^*\varpired$,~where
	\begin{equation}
		\label{prequantum_reduction_diagram}
	    \begin{tikzcd}
	       \Phi\inv(0) \rar[hook]{j}\dar[swap]{\pi} & \tilde X \\
	       \tilde X/\!\!/G\rlap{.}
	    \end{tikzcd}
	\end{equation}
\end{defi}

 \begin{rema}
 	\label{two_prequantum_settings}
	As in \eqref{two_settings}, the classic setting where we have a reduced 1-form is when the $G$-action on $\Phi\inv(0)$ is \emph{free and proper}: then again \eqref{prequantum_reduction_diagram} consists of manifolds and $\varpired$ exists as an ordinary 1-form \cite[Thm~2]{Loose:2001}, \cite[5.5]{Ratiu:2022}. More generally, \cite[\nolinebreak 5.10]{Karshon:2016} implies that a diffeological $\varpired$ exists whenever the action of the identity component $G^\mathrm{o}$ on $\Phi\inv(0)$ is \emph{locally free and proper}. 	
 \end{rema}

We need the following prequantum versions of \eqref{s-intertwiner} and \eqref{s-induced}:

\begin{p-intertwiner}
	\label{p-intertwiner}
	This is
	\begin{equation}
	   \begin{aligned}
		   \label{prequantum_hom}
		   \Hom_G(\tilde X_1, \tilde X_2)&:=
		   (\tilde X_1^-\boxtimes\tilde X_2^{\vphantom-})/\!\!/G\\
		   &\phantom{:}=\Phi\inv(0)/G,
	   \end{aligned}
	\end{equation}
	where $(\tilde X_i,\varpi_i)$ are prequantum $G$-spaces and $\tilde X_1\boxtimes\tilde X_2$ denotes their \emph{prequantum product} \cite[18.52]{Souriau:1970}, i.e.~the quotient of $\tilde X_1\times\tilde X_2$ by the action of the antidiagonal subgroup $\Delta=\{(z\inv,z):z\in\TT\}$ of $\TT^2$, with 1-form and $G$-action obtained by passage to the quotient from $\varpi_1+\varpi_2$ and the diagonal $G$-action. Moreover $\tilde X_1^-$ is short for $(\tilde X_1,-\varpi_1)$, so \eqref{prequantum_hom} uses instead the 1-form $\varpi_2-\varpi_1$, the $\Delta$-action $(z(\tilde x_1),z(\tilde x_2))$, and by \eqref{prequantum_moment}, the moment map $\Phi(\Delta(\tilde x_1,\tilde x_2))=\Phi_2(\tilde x_2)-\Phi_1(\tilde x_1)$.
\end{p-intertwiner}

\begin{p-induced}
	\label{p-induced}
	This is
	\begin{equation}
	   \label{prequantum_induction}
	   \begin{aligned}
		   \IND HG{\tilde Y}&:=(T^*G\times\tilde Y)/\!\!/H\\
		   &\phantom{:}= \psi\inv(0)/H,
	   \end{aligned}
	\end{equation}
	where $H\subset G$ is a closed subgroup, $(\tilde Y,\varpi_{\tilde Y})$ is a prequantum $H$-space, and $\smash{\tilde L}=T^*G\times\smash{\tilde Y}$ is the prequantum $G\times H$-space with action $(g,h)(p,\tilde y)=(gph\inv, h(\tilde y))$ and $1$-form $\varpi_{T^*G}+\smash{\varpi_{\tilde Y}}$; the $\TT$-action is $z(p,\tilde y)=(p,z(\tilde y))$, and the moment map \eqref{prequantum_moment}, $\phi\times\psi:\smash{\tilde L}\to\LG^*\times\LH^*$, is given by formulas \eqref{phi_and_psi} with a tilde on every $y$. As with \eqref{s-induced}, one verifies that \eqref{prequantum_induction} is a manifold with a reduced 1-form $\varpi_{\smash{\tilde L}/\!\!/H}$, and naturally a prequantum $G$-space prequantizing \eqref{induced_manifold}.
	
	Also as with \eqref{s-induced}, one can see that the local models of Lerman and Willett \cite[4.1]{Lerman:2001} are induced $G$-spaces.
\end{p-induced}

\section{Prequantum Frobenius reciprocity}\label{prequantum_Frobenius}

With $H\subset G$ still a closed subgroup, write $\Res^G_H$ for the restriction functor from prequantum $G$- to prequantum $H$-spaces, i.e., the $G$-action gets restricted to $H$.

\begin{theo}
   \label{prequantum_frobenius}
   If $\tilde X$ is a prequantum $G$-space and $\tilde Y$ a prequantum $H$-space\textup, then we have an isomorphism of reduced spaces
   \begin{equation}\
	  \label{prequantum_frobenius_equality}
      \Hom_G(\tilde X,\IND HG{\tilde Y})=\Hom_H(\Res^G_H\tilde X,\tilde Y),
   \end{equation}
   i.e.~there is a \textup(diffeological\textup) diffeomorphism $t$ from left to right. Moreover\textup, if one side carries a reduced $1$-form\textup, then so does the other\textup, and $t$ relates the $1$-forms.
\end{theo}

\begin{proof}
	The proof closely parallels that of \eqref{symplectic_frobenius}, and we shall mainly emphasize what's new. First we see again that the sides of \eqref{prequantum_frobenius_equality} are $(M/\!\!/H)/\!\!/G$ and $N/\!\!/H$, with \eqref{M_and_N} now replaced by the \emph{prequantum} $G\times H$-space and $H$-space
	\begin{equation}
		\label{prequantum_M_and_N}
		M=\tilde X^-\boxtimes (T^*G\times\tilde Y),
		\qquad\text{resp.}\qquad
		N=\tilde X^-\boxtimes\tilde Y.
	\end{equation}
	In more detail: Define $\check M$ and $\check N$ by these two formulas with $\times$ in place of $\boxtimes$, and endow them with the 1-forms $\varpi_{\check M}=\varpi_{\tilde Y}+\varpi_{T^*G}-\varpi_{\tilde X}$ and $\varpi_{\check N}=\varpi_{\tilde Y}-\varpi_{\tilde X}$. Then $M$ and $N$ are the orbit spaces of $\Delta$-actions \eqref{p-intertwiner}, with 1-forms $\varpi_M$, $\varpi_N$ and group actions obtained by passage to the quotient from $\varpi_{\check M}$, $\varpi_{\check N}$ and
   \begin{equation}
	   (g,h)(\tilde x,p,\tilde y)=(g(\tilde x),gph\inv,h(\tilde y)),
	   \ \quad\text{resp.}\ \quad
	   h(\tilde x,\tilde y)=(h(\tilde x),h(\tilde y)).
   \end{equation}
   The resulting moment maps \eqref{prequantum_moment}, $\phi_M\times\psi_M: M\to\LG^*\times\LH^*$ and $\psi_N:N\to\LH^*$, or rather their pull-backs $\phi_{\check M}\times\psi_{\check M}$ and $\psi_{\check N}$ to $\check M$ and $\check N$, are given by the same formulas \eqref{moments_M_and_N} as before, except with a tilde over every $x$ and $y$.
   
   Now define a submersion $\check r:\check M\to\check N$ and right inverse immersion $\check r':\check N\to\check M$ by $\check r(\tilde x,p,\tilde y)=(q\inv(\tilde x),\tilde y)$ where $p\in T^*_qG$, respectively $\check r'(\tilde x, \tilde y) = (\tilde x, \Phi(\tilde x),\tilde y)$, where we identify $\LG^*$ with the cotangent space of $G$ at the identity. Clearly these send $\Delta$-orbits to $\Delta$-orbits, so they induce maps $r:M\to N$ and $r':N\to M$ which are smooth by \cite[5.9.6]{Bourbaki:1967}. At this point, \emph{verbatim} the same diagram \eqref{symplectic_diagram} and arguments about it establish as before that $r$ descends to a bijection $t:(M/\!\!/H)/\!\!/G\to N/\!\!/H$, and that $t$ is a diffeomorphism.

   Next, assume that both sides carry reduced 1-forms, $\varpi_{(M/\!\!/H)/\!\!/G}$ and $\varpi_{N/\!\!/H}$. We must show that $\varpi_{(M/\!\!/H)/\!\!/G}=t^*\varpi_{N/\!\!/H}$. Just as in \eqref{key_pull_back}, we see that this~boils down to proving
   \begin{equation}
	   \label{key_pull_back_prequantum}
	   j^*j_1^*\varpi_M=j^*j_1^*r^*\varpi_N.
   \end{equation}
   Now we know from \eqref{restricted_subduction} that the projection $\check M\to M$ induces a \emph{subduction} $(\phi_{\check M}\times\psi_{\check M})\inv(0)\to(\phi_{M}\times\psi_{M})\inv(0)$. Therefore \eqref{key_pull_back_prequantum} will follow if we show that its two sides coincide after pull-back by that subduction, i.e., that
   \begin{equation}
	   \label{upstairs_pull_back_prequantum}
	   \check\jmath^*\check\jmath_1^*\varpi_{\check M}=
	   \check\jmath^*\check\jmath_1^*\check r^*\varpi_{\check N}
   \end{equation}
   where $\check\jmath$ and $\check\jmath_1$ denote the two inclusions $(\phi_{\check M}\times\psi_{\check M})\inv(0)\longhookrightarrow\smash{\psi_{\check M}\inv(0)}\longhookrightarrow\check M$. And exactly as in \eqref{auxiliary_pull_back}, we see that \eqref{upstairs_pull_back_prequantum} will follow if we show
\begin{equation}
	\label{auxiliary_pull_back_prequantum}
	F^*\varpi_{\check M}=F^*\check r^*\varpi_{\check N}
\end{equation}
\emph{for every ordinary smooth map\textup, $F:U\to\check M$\textup, taking values in $(\phi_{\check M}\times\psi_{\check M})\inv(0)$}. So fix such a map $F$, write $F(u)$ as $\check m=(\tilde x,p,\tilde y)$, and regard also the base point $q$ of $p\in T^*_qG$ and $\check n = \check r(\check m) = (q\inv(\tilde x),\tilde y)$ as smooth functions of $\check m$ and $u$. Then again derivatives of these functions (and right translation by $q\inv$) map each $\delta u\in T_uU$ to vectors $\delta\check m$, $\delta\tilde x$, $\delta p$, $\delta\tilde y$, $\delta q$, $\delta\check n$ and an element $Z:=\delta q.q\inv\in\LG$; and computing as in \eqref{pull_back_computation}, we get
\begin{align}
	(F^*\check r^*\varpi_{\check N})(\delta u)
	&=\varpi_{\check N}(\delta\check n)\notag\\
	&=\varpi_{\tilde Y}(\delta\tilde y)
	-\varpi_{\tilde X}(\delta[q\inv(\tilde x)])\notag\\
	&=\varpi_{\tilde Y}(\delta\tilde y)
	-\varpi_{\tilde X}(\delta\tilde x - Z_{\tilde X}(\tilde x))\notag\\
	&=\varpi_{\tilde Y}(\delta\tilde y) + \<\Phi(\tilde x),Z\> 
	- \varpi_{\tilde X}(\delta\tilde x)\\
	&=\varpi_{\tilde Y}(\delta\tilde y) + \<pq\inv,Z\> 
	- \varpi_{\tilde X}(\delta\tilde x)\notag\\
	&=\varpi_{\tilde Y}(\delta\tilde y) + \varpi_{T^*G}(\delta p)
	- \varpi_{\tilde X}(\delta\tilde x)\notag\\
	&=\varpi_{\check M}(\delta\check m)\notag\\
	&=(F^*\varpi_{\check M})(\delta u)\notag
\end{align}
as desired. Here the third equality is as in \eqref{action_map_derivative}, the fourth is \eqref{prequantum_moment}, and the fifth is because $F(U)\subset\phi_{\check M}\inv(0)$.

Finally, assume merely that \emph{one} reduced 1-form exists, $\varpi_{(M/\!\!/H)/\!\!/G}$ or $\varpi_{N/\!\!/H}$. Then again we can define the other by $\varpi_{(M/\!\!/H)/\!\!/G}=t^*\varpi_{N/\!\!/H}$, and again chasing in \eqref{symplectic_diagram} and using \eqref{key_pull_back_prequantum} proves $\pi_3^*\varpi_{N/\!\!/H} = j_3^*\varpi_N$ (resp.~$\pi_2^*\varpi_{(M/\!\!/H)/\!\!/G} = j_2^*\varpi_{M/\!\!/H}$), as desired. 
\end{proof}

\section{Relation with unitary Frobenius reciprocity}

Our discussion so far has been purely geometrical, in the categories of Hamiltonian or prequantum $G$- and $H$-spaces; yet we motivated it by a ``corresponding'' story about unitary representations. In this section we recall the unitary story, then describe and exemplify how the correspondence is \emph{expected} to work. We do so under the assumptions, required for unitary Frobenius reciprocity and highest weight theory, that \emph{$G$ and $H$ are compact in} \ref{unitary_frobenius_section}--\ref{expectations}, and \emph{connected in} \ref{Borel-Weil}--\ref{noncompact}.

\subsection{Unitary Frobenius reciprocity}
\label{unitary_frobenius_section}
In the category \{unitary $G$-modules\}, objects are Hilbert spaces $(V,(\,\cdot\,|\,\cdot\,))$ with a continuous $G$-action by unitary operators; $\Hom_G(V,V')$ consists of all $G$-equivariant, bounded linear maps $V\to\nolinebreak V'$; restriction $\Res^G_H$ is self-explanatory; and \emph{induction} $\IND HG{}$ takes a unitary $H$-mod\-ule $W$ to the unitary $G$-module
\begin{equation}
	\label{induced_module}
	\IND HGW:=
	\left\{f:G\to W:
		\begin{array}{l}
			\text{(a) }f(gh)=h\inv(f(g))\quad\ \forall\,h\in H\\[1ex]
			\text{(b) }\|f\|^2:=\int_{G/H}\|f(g)\|^2\,d\dot g<\infty
		\end{array}
	\right\}
\end{equation}
where $G$ acts by $(gf)(g')=f(g\inv g')$. Here as usual the integrand depends only on $\dot g=gH$; by $d\dot g$ we mean a $G$-invariant measure on $G/H$; and members of \eqref{induced_module} are really classes of (measurable) functions modulo the relation $\|f_1-f_2\|^2=0$. For continuous $f$ this means equality, so they make a dense $G$-invariant subspace of \eqref{induced_module} called the \emph{continuous vectors}. Now Frobenius reciprocity \cite{Weil:1940} asserts:

\begin{theo}
	\label{unitary_frobenius}
	If $V$ is a unitary $G$-module and $W$ is a unitary $H$-module\textup, both finite-dimensional\textup, then we have an isomorphism of vector spaces
	\begin{equation}\
	  \label{unitary_frobenius_equality}
	  \Hom_G(V,\IND HGW)=\Hom_H(\Res^G_HV,W).
	\end{equation}
\end{theo}

\begin{proof}
	The required maps $t:A\mapsto a$ from left to right and $t\inv:a\mapsto A$ from right to left can both be defined by the commutativity (for all $g$) of the diagram
	\begin{equation}
		\label{frob_diagram}
		\begin{tikzcd}[baseline=-2pt,row sep=huge,column sep=huge,every label/.append
	  style={font=\small}]
	        V
			\ar[d,swap,"\textrm{action of }g\inv"]
			\ar[r,"A"]
			&
			\operatorname{Ind}_H^GW
			\ar[d,"\textrm{evaluation at }g" pos=0.45]
			\\
	        \Res^G_HV
			\ar[r,"a"]
			&
			W\rlap{,}
		\end{tikzcd}
	\end{equation}
	i.e. $A(v)(g)=a(g\inv v)$. We refer to \cite[p.\,31]{Wallach:1988} or \cite[p.\,303]{Kowalski:2014} for the straightforward verification that this works, as well as the subtler fact that $A$ necessarily takes values in the subspace of continuous vectors (to which, strictly speaking, our diagram should of course restrict the domain of ``evaluation at $g$'').
\end{proof}

\begin{rema}
	\label{frob_decomposition}
	The main use of \eqref{unitary_frobenius_equality} is to provide the decomposition of an induced $G$-module into irreducibles: $\IND HGW = \smash{\bigoplus_{V\in\widehat G}V\otimes\Hom_H(V,W)}$ (Hilbert sum), where $v\otimes a$ corresponds to the function $f(g) = a(g\inv v)$.
\end{rema}

\subsection{Borel--Weil correspondence}
\label{Borel-Weil}
The relation with symplectic geometry starts with Borel and Weil \cite{Serre:1954} who attached, to the irreducible $G$-module $V$ with highest weight $\lambda$, a homogeneous Kähler manifold $X$ that we now recognize as the coadjoint orbit of $\lambda$. (Normally one regards $\lambda$ as living in $\LT^*$, the dual of the Lie algebra of a maximal torus~$T$; but for maximal $T$, $\int_T\operatorname{Ad}(t)\,dt$ is a projection $\LG\to\LT$ whose dual $\LT^*\hookrightarrow\LG^*$ identifies $\LT^*$ to $\{T$-fixed points in $\LG^*\}$.) This assignment has elementary functorial properties, emphasized by Weinstein in \cite[p.\,8]{Weinstein:1981}:
\bgroup
\setlength{\extrarowheight}{2pt}
\begin{equation}
	\label{known}
	\begin{array}{|lw{c}{1.3cm}|w{c}{3.2cm}|w{c}{3.6cm}|}
		\hline
		&
		\text{group } K &
		\begin{tabular}{c}
			irreducible \\[-2pt]
			unitary $K$-module
		\end{tabular} &
		\begin{tabular}{c}
			corresponding \\[-2pt]
			Hamiltonian $K$-space
		\end{tabular}
		\\
		\hline\hline
		\!\!\texttt{a.}&
		G &
		V &
		X
		\\
		\hline
		\!\!\texttt{b.}&
		H &
		W &
		Y
		\\
		\hline
		\!\!\texttt{c.}&
		G\times H &
		V\otimes W &
		X\times Y
		\\
		\hline
		\!\!\texttt{d.}&
		G\times H &
		\Hom(V, W) &
		X^-\times Y
		\\
		\hline
		\!\!\texttt{e.}&
		G\times G &
		\operatorname{End}(V)\ (\ni g_V) &
		X^-\times X\ (\supset g_X)
		\\
		\hline
		\!\!\texttt{f.}&
		G &
		V\rlap{$^*$} &
		X\rlap{$^-$}
		\\
		\hline
		\!\!\texttt{g.}&
		\emph{any} &
		\CC \text{ (trivial)} &
		\{0\}\rlap{.}
		\\
		\hline
	\end{array}
\end{equation}

\subsection{Further expectations}
\label{expectations}
At ({\tt e}) above is Weinstein's observation that the $G$-actions themselves relate unitary operators $g_V$ to (the Lagrangian graphs~of) symplectomorphisms $g_X$. From this he drew his principle that vectors (or complex lines: see ({\tt g})) on the left should often correspond to Lagrangian submanifolds on the right. Meanwhile \cite[p.\,498]{Kazhdan:1978}, \cite[p.\,6]{Weinstein:1981} and \cite[pp.\,516, 532]{Guillemin:1982} suggested extending \eqref{known} to more (typically \emph{reducible}) modules, in such a way that more operations become intertwined:
\begin{equation}
	\label{desired}
	\begin{array}{|lw{c}{1.3cm}|w{c}{3.2cm}|w{c}{3.6cm}|}
		\hline
		\!\!\texttt{h.}&
		H &
		\Res^G_H\rlap{V}\phantom{W} &
		\Res^G_HX
		\\
		\hline
		\!\!\texttt{i.}&
		G &
		\IND HGW &
		\IND HGY
		\\
		\hline
		\!\!\texttt{j.}&
		G &
		V\oplus V' &
		X\sqcup X'
		\\
		\hline
		\!\!\texttt{k.}&
		1 &
		V^G &
		X/\!\!/G
		\\
		\hline
		\!\!\texttt{l.}&
		1 &
		\Hom_G(V,V') &
		\Hom_G(X,X')\rlap{.}
		\\
		\hline
	\end{array}
\end{equation}
\egroup
Note that \eqref{known} and \eqref{desired} are fundamentally different: the former lists known properties of an existing bijection, whereas the latter only gives prescriptions or \emph{desiderata} for a correspondence (`quantization') which hasn't yet been defined, despite 100 years of efforts \cite{Heisenberg:1925}. Clearly, unitary Frobenius \eqref{unitary_frobenius_equality} makes symplectic Frobenius \eqref{symplectic_frobenius_equality} \emph{necessary} for the existence of such a correspondence: this was our motivation for proving it. But then the (borderline meta-mathematical) question remains whether or not \eqref{desired} can be enforced a) consistently, b) by any existing `quantization' method. We discuss two examples.

\begin{Peter-Weyl}
	\label{Peter-Weyl}
	The simplest induced module is $\IND{\{e\}}{\hspace{2.5pt}G}\CC = L^2(G)$, to which ({\tt g},\,{\tt i}) prescribe attaching $\IND{\{e\}}{\hspace{2.5pt}G}\{0\}=T^*G$. Then ({\tt l}) and special case ({\tt k}), cf.~\eqref{s-intertwiner}, prescribe that if $V$ and $X$ correspond under Borel--Weil, then also
	\begin{equation}
		\label{Hom_spaces}
		\Hom_G(V,L^2(G))
		\quad\ \text{ should correspond to }\ \quad
		\Hom_G(X,T^*G).
	\end{equation}
	Now, \emph{Frobenius} (\ref{unitary_frobenius_equality},~\ref{symplectic_frobenius_equality}) \emph{equates} the first to $\Hom_{\{e\}}(V,\CC)=V^*$ and the second to $\Hom_{\{e\}}(X,\{0\})=X^-$. Moreover both \emph{happen to inherit}, from the right action, residual $G$-actions which coincide, under these identifications, with the natural (dual) ones on $V^*$ and $X^-$. So \eqref{Hom_spaces} appears here indeed both a) consistent with and b) implemented by the existing case (\ref{known}{\tt f}) of Borel--Weil. 
\end{Peter-Weyl}

\begin{spherical-harmonics}
	\label{spherical-harmonics}
	The next simplest example is with $G=\SO3$~and $H\cong\SO2$ the stabilizer of the north pole $\bm e_3\in S^2$ (a maximal torus). As is well known since Schur \cite{Schur:1924a}, $G$ has an irreducible module $V_\ell$ in each odd dimension $2\ell+1$, which under $H$ splits as the sum $\smash{\bigoplus_{m=-\ell}^\ell V\l m}$ of $1$-dimensional weight spaces (wherein rotation by angle $\varphi$ acts by factor $\e{\i m\varphi}$). Choosing basis vectors $e\l m\in V\l m$ we see that $\Hom_H(V_\ell,\CC)$ is $1$-dimensional and spanned by the form
	\begin{equation}
		\label{a_l}
		a_\ell=(e\l0|\,\cdot\,)\,.
	\end{equation}
	So \emph{Frobenius \eqref{unitary_frobenius_equality} says} that each $V_\ell$ also has multiplicity 1 in $\IND HG\CC=L^2(S^2)$: in other words, \eqref{frob_decomposition} reads $L^2(S^2)=\bigoplus_{\ell=0}^\infty A_\ell(V_\ell)$ with, by \eqref{frob_diagram},
	\begin{equation}
		\label{A_l}
		A_\ell(v)(g\bm e_3)=(ge\l0|v).
	\end{equation}
	This of course is nothing but the theory of spherical harmonics, as elucidated in \cite[p.\,127]{Weyl:1931a}, \cite[p.\,229]{Wigner:1931}: because the Laplacian on $S^2$ commutes with rotations, the $A_\ell(V_\ell)$ must be eigenspaces of it, by Schur's lemma; so the matrix coefficients $A_\ell(e\l m)$ in \eqref{A_l} must be (multiples of) Laplace's $Y\l m$.
		
	Here is the parallel symplectic computation, where as usual $\bm l\in\RR^3$ will also denote the member of $\LG^*$ sending infinitesimal rotation $\j(\bm\alpha)=\bm{\alpha\times\cdot\,}$ to $\<\bm l,\bm\alpha\>$. To $V_\ell$ and $\IND HG\CC$ we attach the coadjoint orbit $X_\ell=\{\ell\bm u:\bm u\in S^2\}$ and the Hamiltonian $G$-space $\IND HG\,\{0\}$, which as \eqref{s-induced-from-0} will recall, is simply
	\begin{equation}
		\label{TS2}
		TS^2 =
		\bigl\{x=(\bm r, \bm p)\in\RR^6: \bm r\in S^2, \<\bm r,\bm p\>=0\bigr\}
	\end{equation}
	with $G$-action $g(x)=(g\bm r,g\bm p)$, $2$-form $d\<\bm p,d\bm r\>$ and moment map $\Phi(x) = \bm{r\times p}$. On $X_\ell$ the $H$-action has moment map the orthogonal projection to the axis $\RR\bm e_3$ (cf.~first parenthesis in §\ref{Borel-Weil}). Hence the reduced space $\Hom_H(X_\ell,\{0\})$ is just a point, as quotient \eqref{s-intertwiner} by $H=\SO2$ of a level which is the equator (or zero):
	\begin{equation}
		\label{H-level}
		(\Psi_{X_\ell^-\times\{0\}})\inv(0)
		=
		\bigl\{
			(\ell\bm u,0)
			:
			\bm u\in S^2, \<\bm e_3,\bm u\>=0
		\bigr\}.
	\end{equation}
	So \emph{Frobenius \eqref{symplectic_frobenius_equality} says} that $\Hom_G(X_\ell,TS^2)$ is also a point --- which of course we can also exhibit directly, as quotient \eqref{s-intertwiner} by $G=\SO3$ of the level
	\begin{equation}
		\label{G-level}
		(\Phi_{X_\ell^-\times TS^2})\inv(0)
		=
		\bigl\{
			(\ell\bm u,\bm r,\ell\bm s)
			:
			(\bm r\ \bm s\ \bm u)\in\SO3
		\bigr\}.
	\end{equation}
	This is the zero section of $TS^2$ when $\ell=0$, and else manifestly also a single $\SO3$-orbit. Note that \eqref{H-level} and \eqref{G-level} are Lagrangian, being both a zero level (coisotropic) and a group orbit in it (isotropic).
\end{spherical-harmonics}

\begin{scho}
	\label{scholium}
	The upshot of this example is that prescription (\ref{desired}{\tt l}) \emph{must attach} $\CC a_\ell$ and $\CC A_\ell$ in (\ref{a_l}--\ref{A_l}) to the two \emph{points} \eqref{H-level}$/H$ and \eqref{G-level}$/G$. This, to address question a) of §\ref{expectations}, is thankfully consistent with Frobenius reciprocity on both sides, and with the idea (\ref{known}{\tt g}) that points should match complex lines and thus `predict multiplicity $1$'.
	
	But why e.g.~\emph{this} complex line, $\CC A_\ell$? Unlike the $\Hom$ spaces \eqref{Hom_spaces}, the point \eqref{G-level}$/G$ has not enough structure to encode that. Instead, one might argue, it remembers \emph{being} the Lagrangian $G$-orbit \eqref{G-level} (and indeed \emph{the unique} $G$-invariant Lagrangian submanifold of $X_\ell^-\times TS^2$: these all lie in the zero level, by virtue of $\<\Phi(x),\LG\>=\<\Phi(x),[\LG,\LG]\>=\omega(\LG(x),\LG(x))=0$); and then it invokes Weinstein's principle to conclude that it should attach itself to \emph{the unique} $G$-invariant complex line in $\Hom(V_\ell,L^2(S^2))$: namely $\CC A_\ell$.
	
	While this sounds compelling, we should note that it is not the same thing as a construction --- such as we have in e.g.~\eqref{Hom_spaces}. In fact, to now address question b), we confess to not knowing any quantization method that would produce the intertwining operator \eqref{A_l} out of the Lagrangian submanifold \eqref{G-level}.
\end{scho}

\subsection{Beyond compact $\mathbf G$}
\label{noncompact}
Correspondences like (\ref{known}{\tt a}) hold for many more Lie groups, perhaps most famously the \emph{exponential groups} (i.e.~$\exp:\LG\to G$ is a diffeomorphism) after Kirillov and Bernat, whose map we can describe as follows \cite[\nolinebreak p.\,30]{Ziegler:1996}: by an easy recursion, every coadjoint orbit $X$ is $\IND{G'}G\{x'\}$ for some singleton orbit of some exponential subgroup $G'$; then as (\ref{desired}{\tt i}) suggests, we map $X$ to $V=\IND{G'}G\CC_{x'}$, where $G'$ acts in $\CC_{x'}$ by the character $\exp(Z)\mapsto\e{\i\<x',Z\>}$.

We refer to \cite{Kirillov:1962,Vergne:1970c} for the fact that this yields a bijection $\LG^*/G\to\widehat G$ from coadjoint orbits onto equivalence classes of unitary $G$-modules. Then again \eqref{desired} suggests that our (isomorphic) reduced spaces in \eqref{symplectic_frobenius_equality},
\begin{equation}
	\label{Hom_spaces_again}
	\text{(a) }\Hom_G(X,\IND HGY)
	\qquad\text{and}\qquad
	\text{(b) }\Hom_H(\Res^G_HX,Y),
\end{equation}
will reflect multiplicity (and Frobenius reciprocity) in decompositions of induced and restricted modules into irreducibles. A difficulty here is that generally these are no longer direct sums \eqref{frob_decomposition} but direct integrals, so unitary Frobenius reciprocity as a `pointwise' equality \eqref{unitary_frobenius_equality} is lost without agreed-upon replacement: see e.g. \cite[\nolinebreak pp.\,968, 996, 1370]{Fell:1988a}. But in a sense (\ref{Hom_spaces_again}a) $=$ (\ref{Hom_spaces_again}b) can fill this role, as results do exist relating these spaces to multiplicities when $X$ and $Y$ correspond (under Kirillov--Bernat) to modules $V$ and $W$:

\begin{exes}
	\ 
	\begin{compactenum}[a)]
		\item In \cite{Vergne:1970c}, Michèle Vergne considers pairs $(H,Y)$ of a connected subgroup $H\subset G$ and a singleton coadjoint orbit $Y=\{y\}\subset\LH^*$, such that $\IND HG\CC_y$ is a \emph{finite sum} of irreducibles $V$; and she shows, in perhaps the earliest instance of `quantization commutes with reduction', that the $V$ that occur do so with multiplicity equal to \emph{the cardinality of} (\ref{Hom_spaces_again}b). This includes in particular Pukánszky's irreducibility criterion, satisfied by our pairs $(G',\{x'\})$ above and first understood symplectically in \cite{Duval:1992}.
		\item In \cite{Corwin:1988,Fujiwara:1991}, Corwin, Greenleaf and Fujiwara show that the cardinality of (\ref{Hom_spaces_again}b) can always be taken as multiplicity function in the direct integral decompositions of $\IND HGW$ and $\Res^G_HV$, \emph{both}. They call this coincidence `a~form of Frobenius reciprocity, which we hope to explore in the future'. 
		\item
		Beyond exponential groups, recent papers (e.g.~\cite{Halima:2018,Kobayashi:2018,Hochs:2019}) discuss this cardinality, under the name \emph{Corwin--Greenleaf multiplicity function of $(X,Y)$}, in more cases where one has an `orbit method' correspondence.
	\end{compactenum}
\end{exes}

\subsection{Role of prequantization}
Finally, one might ask why we take the trouble to treat not only Hamiltonian $G$-spaces $X$ but also prequantum $G$-spaces $\tilde X$. The reason is that the latter are known (in many cases) or expected (in general) to much better reflect the theory of unitary $G$-modules. This manifests itself in a number of ways, of which we will mention just three.

•\ First, even for $G$ compact, the correspondence $V\mapsto X$ (\ref{known}{\tt a}) is \emph{not~onto} all coadjoint orbits: instead, its range is precisely all prequantizable $X$. (This means: $[\omega]\in H^2(X,\ZZ)$ \emph{and} $G$ lifts to act on $\tilde X$, with moment map \eqref{prequantum_moment}. Both hold by Borel and Weil's original identification of $X$ as the $G$-orbit in $\mathbf P(V)$ of a projectivized highest weight vector: for we may then take as $\tilde X$ the tautological circle bundle above it, on which $G$ does act.) \emph{Likewise}, in (\ref{desired}{\tt l}) one expects only `integral' reduced spaces $\Hom_G(X,X')$ to yield nonzero $\Hom_G(V,V')$: this is part of the Guillemin--Sternberg conjecture \cite{Guillemin:1982}, of which many variants have since been proved --- see \cite{Sjamaar:1996,Guillemin:2002,Vergne:2002}.

\begin{exem}
	In (\ref{frob_decomposition},~\ref{Peter-Weyl}), $L^2(S^1)=\bigoplus_{n\in\ZZ}\CC(\e{\i\varphi}\mapsto\e{\i n\varphi})$. In other words, the subscripts in trigonometric series are \emph{integer} coadjoint orbits $\{n\}$ of the circle group.
\end{exem}

•\ Secondly, when we move beyond compact or exponential to e.g.~solvable~$G$,  prequantized rather than bare coadjoint orbits parametrize irreducible $V$ \cite{Auslander:1967}, and prequantization ceases to be always unique. Thus $V\mapsto X$ is \emph{not one-to-one}: instead, $V\mapsto\tilde X$ is. Therefore, no prequantization-unaware theory can possibly hope to fully describe multiplicities and their reciprocity.

\begin{exem}
	Section 9 of \cite{Ratiu:2022} exhibits a Lie group $G$ with an induced coadjoint orbit $\IND HGY\cong TS^1$ having infinitely many prequantizations $\IND HG{\tilde Y_\lambda}$ ($\lambda\in\RR/\ZZ$) associated to irreducible unitary $G$-modules $V_\lambda$; and a Hamiltonian $G$-space $X'\cong\RR^2$ (its universal covering) having a unique prequantization $\tilde X'$ associated to a \emph{reducible} unitary $G$-module $V'$. Then \emph{Frobenius \eqref{prequantum_frobenius_equality} computes}
	\begin{equation}
		\Hom_G(\tilde X',\IND HG{\tilde Y_\lambda}) =\TT
		\quad
		\text{for all $\lambda$},
	\end{equation}
	which correctly predicts that $V'=\int^\oplus V_\lambda\,d\lambda$ with uniform multiplicity $1$. (Here `associated' is in the precise sense of \cite{Auslander:1967}, as $G$ is connected type I solvable, and $X'$ is also a coadjoint orbit of a connected type I solvable overgroup $G'\supset G$.)
\end{exem}

•\ Third, prequantization lets us recast Weinstein's principle (§\ref{expectations}) in terms of Legendrian submanifolds, i.e.~$\smash{\frac12}\dim(X)$-dimensional 
submanifolds of $\tilde X$ on which $\varpi$ vanishes \cite{Arnold:2000}: arguably \emph{those} are what best corresponds to individual vectors in (\ref{known}{\tt a}) --- perhaps only when they project bijectively onto Lagrangian submanifolds of $X$; cf.~the discussion of \emph{Planck lifting} in \cite[18.111, 19.25]{Souriau:1970}. One way in which this immediately improves matters is:

\begin{prop}
	\label{bijection}
	Let $\tilde X$ be a prequantum $G$-space with reduced space $\tilde X/\!\!/G$\textup, and assume that the $G$-action on $\level=\Phi\inv(0)$ is free and proper \textup{(\ref{prequantum_reduction_diagram},~\ref{two_prequantum_settings})}. Then $M\mapsto L=\pi\inv(M)$ defines a bijection between Legendrian submanifolds $M$ of $\tilde X/\!\!/G$\textup, and $G$-invariant Legendrian submanifolds $L$ of $\tilde X$.
\end{prop}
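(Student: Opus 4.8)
The plan is to lean on the three facts guaranteed by the hypothesis that $G$ acts freely and properly on $\level=\Phi\inv(0)$: that $\level$ is an embedded submanifold of $\tilde X$ (indeed $0$ is a regular value of $\Phi$ along it), that $\pi:\level\to\tilde X/\!\!/G$ is a principal $G$-bundle with $\tilde X/\!\!/G$ a manifold, and that $\tilde X/\!\!/G$ carries the reduced contact $1$-form $\varpired$ with $j^*\varpi=\pi^*\varpired$ (\eqref{two_prequantum_settings}; the regular-value claim is a one-line check, since on $\level$ one has $\varpi(Z_{\tilde X})=0$ while $\varpi$ equals $1$ on the Reeb field). The single observation that makes the bijection run is that \emph{every} $G$-invariant Legendrian submanifold $L\subset\tilde X$ automatically lies in $\level$: indeed $G$-invariance gives $Z_{\tilde X}(\ell)\in T_\ell L$ for all $Z\in\LG$ and $\ell\in L$, while the Legendrian condition gives $\varpi|_{T_\ell L}=0$, so by the very definition \eqref{prequantum_moment} of the prequantum moment map $\<\Phi(\ell),Z\>=\varpi(Z_{\tilde X}(\ell))=0$ for all $Z$, i.e.\ $\Phi(\ell)=0$.

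First I would verify that $M\mapsto L:=\pi\inv(M)$ is well defined and injective. If $M$ is Legendrian, then since $\pi$ is a submersion with $\dim G$-dimensional fibres, $L=\pi\inv(M)$ is a $G$-invariant embedded submanifold of $\level$, hence of $\tilde X$; a dimension count gives $\dim L=\dim M+\dim G=\tfrac12\dim X$, the Legendrian dimension upstairs. Moreover $\varpi|_L=(\pi^*\varpired)|_L$ vanishes, because $d\pi$ carries $TL$ into $TM$ and $\varpired|_M=0$. Thus $L$ is a $G$-invariant Legendrian submanifold. Injectivity is immediate: $\pi$ is surjective, so $\pi\bigl(\pi\inv(M)\bigr)=M$ recovers $M$ from $L$.

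The substance is surjectivity. Given a $G$-invariant Legendrian $L\subset\tilde X$, the key observation places $L\subset\level$, so that $\pi(L)$ even makes sense. As $G$ acts freely and properly on $\level$ and $L$ is a $G$-invariant embedded submanifold, the restricted action is again free and proper; hence $M:=\pi(L)=L/G$ is an embedded submanifold of $\tilde X/\!\!/G$ and $\pi|_L:L\to M$ is a principal $G$-bundle, so $\dim M=\dim L-\dim G$, the correct Legendrian dimension in $\tilde X/\!\!/G$. That $M$ is Legendrian follows from the same pull-back relation: for $w=d\pi(v)\in T_{\pi(\ell)}M$ with $v\in T_\ell L$ one has $\varpired(w)=(\pi^*\varpired)(v)=(j^*\varpi)(v)=\varpi(v)=0$. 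Finally $L=\pi\inv(\pi(L))=\pi\inv(M)$ because $L$ is $G$-saturated, so $L$ lies in the image, completing the bijection.

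The only step demanding genuine thought is the containment $L\subset\level$; it is the prequantum counterpart of the zero-level argument in \eqref{scholium}, and it is exactly where the defining property \eqref{prequantum_moment} of the prequantum moment map is indispensable. Everything else is the standard descent of a $G$-invariant submanifold through a principal bundle, which I expect to be routine. A minor point I would settle along the way is that $\tilde X/\!\!/G$ genuinely is a contact (prequantum) manifold, so that ``Legendrian submanifold of $\tilde X/\!\!/G$'' is meaningful at all; this is part of free-and-proper prequantum reduction (\eqref{two_prequantum_settings}).
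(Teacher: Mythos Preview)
Your proposal is correct and follows essentially the same approach as the paper: the crux is the observation that any $G$-invariant Legendrian $L$ lies in $\Phi\inv(0)$ by virtue of \eqref{prequantum_moment}, after which everything is routine principal-bundle descent. The paper's own proof is only a sketch that points to Guillemin--Sternberg's Lagrangian version \cite[Thm~2.6]{Guillemin:1982} and notes that, in the prequantum setting, \eqref{prequantum_moment} alone yields $L\subset\Phi\inv(0)$ (whereas the Lagrangian case needed the extra hypothesis $\LG=[\LG,\LG]$); you have effectively written out what ``mutatis mutandis'' means there.
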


\begin{proof}[Sketch of proof]
	Guillemin and Sternberg \cite[Thm 2.6]{Guillemin:1982} prove the same thing with ($\tilde X$, prequantum, Legendrian) replaced by ($X$, Hamiltonian, Lagrangian), under the pesky hypothesis $\LG=[\LG,\LG]$ needed to show, just as we did in \eqref{scholium}, that $G$-invariant Lagrangian submanifolds lie in $\Phi\inv(0)$. (We say \emph{needed}, because parallels to the equator fail this in our $\SO2$-space $S^2$: hence why, as alert readers noticed, \eqref{scholium} did not call \eqref{H-level} `the unique' $H$-invariant Lagrangian.) Now in our new setting, \eqref{prequantum_moment} alone forces $G$-invariant Legendrian submanifolds to be in $\Phi\inv(0)$, and the rest of \cite{Guillemin:1982}'s proof still works \emph{mutatis mutandis}.
\end{proof}

\begin{exem}
	Let us now prequantize the computation (\ref{TS2}--\ref{G-level}), keeping its notation. Following \cite{Serre:1954}, the sphere $X_1=S^2$ is prequantized by the $G$-orbit of the (highest weight) vector $\smash{\frac1{\sqrt2}(\bm e_1-\i\bm e_2)}$ in the $G$-module $V_1=\CC^3$:
	\begin{equation}
		\label{prequantized_sphere}
		\tilde X_1=
		\Bigl\{
			\xi=
			\tfrac1{\sqrt2}(\bm u_1-\i\bm u_2)
			:
			(\bm u_1\,\bm u_2\,\bm u_3)\in\SO3
		\Bigr\}
	\end{equation}
	with contact $1$-form $\varpi\subtilde X1(\delta\xi) = \frac1\i(\xi|\delta\xi) = \<\bm u_2,\delta\bm u_1\>$, Reeb $\TT$-action $z(\xi)=z\xi$, and projection onto $X_1$ given by the moment map \eqref{prequantum_moment}: $\Phi\subtilde X1(\xi) = \bm u_1\bm\times\bm u_2=\bm u_3$. We emphasize that $\j(\bm u_3)\xi=\i\xi$, so that $\e{\i\thetaup}\xi$ is also the rotated vector $\e{\j(\thetaup\bm u_3)}\xi$. Next $\tilde X_\ell$ and $\varpi\subtilde X\ell$ are the image and push-forward of $\tilde X_1$ and $\ell\varpi\subtilde X1$ under the~map $\xi\mapsto\xi^\ell$ of $\CC^3$ to $\operatorname{Sym}^\ell(\CC^3)$ (`fusion method' of \cite{Souriau:1967,Souriau:1970}: that map's fibers are the orbits of the Reeb action of the $\ell$th roots of $1$, and $\tilde X_\ell$ is also known as the lens space $L(2\ell,1)$ \cite{Bredon:1993}). Finally $\{0\}$ in (\ref{known}{\tt g}) is prequantized by $\{\tilde0\}=\TT$ with $1$-form $dz/\i z$ and trivial $H$-action, and $TS^2$ trivially by
	\begin{equation}
		\tilde{TS}{}^2 = TS^2\times\TT
		\qquad\text{with}\qquad
		\varpi_{\tilde{TS}{}^2} = \<\bm p,d\bm r\>+\frac{dz}{\i z}.
	\end{equation}
	Now the relevant zero levels inside prequantum products \eqref{p-intertwiner} are the preimages there of our Lagrangian submanifolds \eqref{H-level} and \eqref{G-level}: using ($H$- resp.~$G$-) equivariance, one readily finds
	\begin{align}
		(\Psi_{\tilde X_\ell^-\boxtimes\{\tilde0\}})\inv(0)&=
		\Bigl\{
			\Bigl(
				\frac{\bm e_3-\i\bm s}{\mbox{\small$\sqrt2$}}
			\Bigr)^\ell
			\boxtimes z
			:
			\bm s\in S^2, \<\bm e_3,\bm s\>=0,
			z\in\TT
		\Bigr\},
		\\[2ex]
		(\Phi_{\tilde X_\ell^-\boxtimes\tilde{TS}{}^2})\inv(0)&=
		\Bigl\{
			\Bigl(\frac{\bm r-\i\bm s}{\mbox{\small$\sqrt2$}}\Bigr)^\ell
			\boxtimes(\bm r,\ell\bm s,z)
			:
			(\bm r\ \bm s\ \bm u)\in\SO3,
			z\in\TT
		\Bigr\},
	\end{align}
	both of which visibly split as the product of $\TT$ with any one ($H$- resp.~$G$-) orbit, those being simply transitive for $\ell\ne0$ and Legendrian by dimension and \eqref{prequantum_moment}. Hence we find, \emph{as predicted by Frobenius \eqref{prequantum_frobenius_equality}}, that the two quotients $\Hom_H(\tilde X_\ell,\{\tilde0\})$ and $\Hom_G(\tilde X_\ell,\tilde{TS}{}^2)$ are equal. More precisely they are equal to $\TT$, which indicates multiplicity $1$; and since the connected Legendrian submanifolds of $\TT$ are just its points, this also illustrates \eqref{bijection}, twice. Our remarks in \eqref{scholium} still hold \emph{mutatis mutandis}.
\end{exem}

\part{Reduced forms}

The purpose of this Part is to give new sufficient conditions, improving on the state-of-the-art (\ref{two_settings}/\ref{two_prequantum_settings}), for the existence of reduced forms on reduced spaces. The main question: whether or when a differential form is pulled back from a quotient, is of course an old one, with two main strands.

(a) \emph{Integral invariants}. Here the quotient is the leaf space of a foliation, and a form that ``should'' descend is called an \emph{integral invariant}. The name comes from Poincaré, who meant invariance along flows of actual integrals of forms over cycles or chains --- e.g.~Helmholtz's circulation of velocity, or flux of vorticity, in fluid flow: see \cite[§§234--235]{Poincare:1899} and \cite[§60\emph{a\textup,n\textup,r}]{Thomson:1869}. Soon after, É.~Cartan refocused attention on the integrands, calling a form $\form$ integral \cite{Cartan:1902} or invariant \cite[§31]{Cartan:1922} if ``it can be expressed by means of first integrals and their differentials alone'', and proving that such is the case iff all vectors $v$ tangent to the leaves satisfy $\form$'s \emph{characteristic system}: $\i_v\form=\i_vd\form=0$ \cite[§78]{Cartan:1922}. (Cartan didn't speak of interior products, nor leaf spaces: that was done by Ehresmann \cite{Ehresmann:1946} and Reeb \cite[§4, Prop.~2 and 5]{Reeb:1952a}.) Later Souriau popularized the process of quotienting by (characteristic) foliations, by making it a cornerstone in his symplectic treatment of mechanics and quantization \cite{Souriau:1967,Souriau:1970}. He also traced the idea back to Lagrange, in a theorem that saw $t, \Delta t, \delta t$ ``disappear'' from a certain $2$-form: see \cite[p.\,329]{Lagrange:1811}, \cite[§8]{Cayley:1858}, \cite[§146]{Whittaker:1904a}, \cite[p.\,47]{Souriau:1986}.

(b) \emph{Basic forms}. Here the quotient is the orbit space of the action of a Lie group $G$, and a form $\form$ that ``should'' descend is called \emph{basic}. The name came in stages from H.~Cartan \cite[§4]{Cartan:1950}, Reeb \cite[§4]{Reeb:1952a}, and finally Koszul \cite[§1]{Koszul:1953}, for whom basic means \emph{$G$-invariant and horizontal}: $g^*\form=\form$ for all $g\in G$, and $\i_{Z(\cdot)}\form=0$ for all $Z\in\LG$. Special cases already appear, without the name, in \cite[13.7--8]{Chevalley:1948} and even \cite[§5]{Cartan:1929}.

Both (a) and (b) say ``should'', because traditionally, pulling forms back from a quotient only makes sense if that quotient is a \emph{manifold}. To ensure this, one has little choice but to make standard assumptions such as: in (a), the foliation admits \emph{transverse sections} \cite[9.2.9]{Bourbaki:1967}; resp.~in (b), the $G$-action is \emph{free and proper} \cite[\nolinebreak III.1.5, Prop.~10]{Bourbaki:1972}. Then indeed, the quotient is a manifold, and pull-back gives an isomorphism between the complex of differential forms on the quotient and the integral invariants (resp.~the basic forms): see \cite[5.21]{Souriau:1970} (resp.~\cite[§1]{Koszul:1953} for $G$ compact and \cite[p.\,185]{Guillemin:2002} in general).

Returning to symplectic reduction \eqref{reduction_diagram}, \cite{Marsden:2007} recounts that Smale \cite{Smale:1970} inspired \cite{Meyer:1973,Marsden:1974} to divide $\Phi\inv(0)$ by the $G$-action: then, assuming it \emph{free and proper}, (b) recovers precisely the Marsden--Weinstein result \eqref{two_settings}. To go beyond that (hence necessarily outside the realm of manifolds), perhaps the most successful foray is the theory of Sjamaar, Lerman and Bates \cite{Sjamaar:1991,Bates:1997} who assume a \emph{proper} but not free $G$-action on $X$. Then they show that $X/\!\!/G$ is a (Whitney) `stratified symplectic space', i.e.\,(among other things) a disjoint union of manifolds, each carrying a symplectic form, whose associated Poisson brackets fit together by all being induced by the global Poisson structure of~\cite{Arms:1991}.

Now diffeology offers another possible avenue, because the subset $C=\Phi\inv(0)$ and quotient $C/G=X/\!\!/G$ in \eqref{reduction_diagram} are canonically diffeological: so, regardless of whether either is a manifold, we can talk about $j^*\omega$ and ask if it is pulled back from a (necessarily unique, global, diffeological) form on the quotient. Souriau \cite{Souriau:1985a} called this being an \emph{integral invariant of the subduction $\pi$}, and gave for it a fundamental criterion, recalled below in \eqref{desired_equal_pull_backs}.

Past works using this criterion include \cite{Hector:2011} which generalized Souriau's result for (a) to regular foliations \emph{without} transverse sections; \cite{Watts:2012} which generalized Koszul's result for (b) to \emph{non-free} compact Lie group actions; \cite{Karshon:2016} which extended it to \emph{non-free} proper Lie group actions; \cite{Watts:2022} which extended it further to orbit spaces of proper Lie groupoids; and \cite{Miyamoto:2023} which extended \cite{Hector:2011} to certain singular foliations. \emph{In all these cases the space being quotiented is a manifold\textup, whereas for us $\Phi\inv(0)$ need not be.}

In the following sections, we use Souriau's criterion to solve the existence problem for reduced forms (\ref{reduced_2-form}/\ref{reduced_1-form}) when the $G$-action on $\Phi\inv(0)$ is either strict \eqref{reduced_forms_for_strict_actions} or locally free \eqref{reduced_forms_at_regular_values}, or the $G$-action on $X$ is proper \eqref{reduced_forms_for_proper_actions}. The results are~as described in the Introduction; in particular, in the proper case we obtain a global reduced $2$-form that induces the Sjamaar--Lerman--Bates $2$-forms on strata.

\section{When the $G$-action on \texorpdfstring{$\Phi\inv(0)$}{Φ-1(0)} is strict}\label{s:strict}

Whenever a group $G$ acts on a set $X$, we can consider the resulting \emph{Bourbaki~map} $\thetaup: (g,x)\mapsto(x,g(x))$ \cite[III.4.1]{Bourbaki:1960} and factor it as in the following diagram, where $\mathrm{s}$ is the quotient map by the equivalence relation `equal value under $\thetaup$', $\dot\thetaup$ is the resulting bijection of that quotient with $\Im(\thetaup)$, and $\mathrm{i}$ is the injection of that image into $X\times X$:
\begin{equation}
	\label{factorization}
	\begin{tikzcd}[row sep=huge,column sep=huge,every label/.append
  style={font=\small}]
        G\times X \ar[d,swap,"\mathrm{s}"]\ar[r,"\thetaup"] & X\times X \\
        (G\times X)/{\sim}\ar[r,"\dot\thetaup"] & \thetaup(G\times X)\ar[u,swap,hook,"\mathrm{i}"],
	\end{tikzcd}
	\qquad
	\thetaup = \mathrm{i}\circ\dot\thetaup\circ\mathrm{s}.
\end{equation}
If $G$ and $X$ are diffeological and the action is smooth, then all nodes here have natural diffeologies (product, quotient, subset), $\mathrm{i}$ is an induction, $\mathrm{s}$ a subduction, and $\dot\thetaup$ is smooth \cite[1.34, 1.51]{Iglesias-Zemmour:2013}. Following \cite{Souriau:1985a,Iglesias-Zemmour:2013} we say that $\thetaup$ is \emph{strict} if $\dot\thetaup$ is a diffeomorphism, i.e., $\dot\thetaup$ \emph{and} $\dot\thetaup\inv$ are smooth.

\begin{defi}
	\label{strict}
	We will call the $G$-action \emph{strict} if $\thetaup$ is a strict map.
\end{defi}
\begin{exes}
	\label{principal}
	A \emph{free} action (wherein $\thetaup$ is injective) is strict iff it is \emph{principal} in the sense of \cite[8.11]{Iglesias-Zemmour:2013}, i.e., $\thetaup$ is an induction. (Indeed, a strict injection is the same thing as an induction \cite[1.20]{Souriau:1985a}.) Thus we have for instance:
	\begin{compactenum}[a)]
		\item If $G$ is a Lie group and $H$ an arbitrary subgroup (hence canonically also a Lie group \cite[III.4.5]{Bourbaki:1972}), then the left and `right' actions of $H$ on $G$, $h(g)=hg$ and $h(g)=gh\inv$, are principal and hence strict \cite[8.15]{Iglesias-Zemmour:2013}.
		\item More generally, any free smooth action of a Lie group on a manifold is principal and hence strict, according to \cite[3.9.3]{Iglesias:1985}.
	\end{compactenum}
	A \emph{transitive} action (wherein $\thetaup$ is surjective) is strict iff $\thetaup$ is a subduction: see again \cite[1.20]{Souriau:1985a}. For instance:
	\begin{compactenum}
		\item[c)] Any transitive smooth action of a Lie group on a manifold is strict. (For $\thetaup$ is then a surjective submersion, hence a subduction \cite[1.13]{Souriau:1985a}.)
		\item[d)] Actions that are neither free nor transitive can easily be non-strict: thus already the standard action of $\SO2$ on $\RR^2$. (Indeed, the reader will check without trouble that \eqref{smooth_division} below fails for the $1$-plots $P(u) = (0, \smash{\e{-1/u^2}})$ and $Q(u) = (0, \operatorname{sign}(u)\smash{\e{-1/u^2}})$, both understood to be $(0,0)$ at $u = 0$.)
	\end{compactenum}
\end{exes}

Now return to the setting of (\ref{reduced_space}/\ref{prequantum_reduced_space}): the $G$-action of interest is on a moment level $\level=\Phi\inv(0)$ (not necessarily a manifold), and the relevant Bourbaki map writes $\thetaup: G\times \level\to \level\times\level$.

\begin{theo}
	\label{reduced_forms_for_strict_actions}
	In \textup{(\ref{reduced_space}/\ref{prequantum_reduced_space}),} suppose the $G$-action on $\level=\Phi\inv(0)$ is strict. Then $X/\!\!/G$ carries a reduced $2$-form \textup(resp.~$\tilde X/\!\!/G$ carries a reduced $1$-form\textup).
\end{theo}

\begin{proof}
	Let $j$ and $\pi$ be as in \eqref{reduction_diagram}. The criterion of \cite[2.5c]{Souriau:1985a} or \cite[6.38]{Iglesias-Zemmour:2013} says: a reduced 2-form will exist iff, given any two plots $P:U\to \level$ and $Q:U\to \level$ such that $\pi\circ P=\pi\circ Q$, we have
	\begin{equation}
		\label{desired_equal_pull_backs}
		P^*j^*\omega = Q^*j^*\omega
	\end{equation}
	(an equality of ordinary $2$-forms on an open set $U$ in some $\RR^n$). To prove \eqref{desired_equal_pull_backs}, fix $P$, $Q$, $u_0\in U$ and vectors $\delta u_0,\delta'u_0\in T_{u_0}U$. Note that $\pi\circ P=\pi\circ Q$ says that~$P\times Q$ is a \emph{plot of $\level\times\level$ taking values in} $\Gamma:=\thetaup(G\times\level)$. So strictness of $\thetaup$, as expressed in \cite[1.54]{Iglesias-Zemmour:2013}, means: for any such $P\times Q$, there are an open neigh\-bor\-hood $V$ of $u_0$, and a plot $R\times S: V\to G\times\level$, such that $\thetaup\circ(R\times S)=(P\times Q)_{|V}$. Now this says that $S=P_{|V}$ and $R$ is a smooth map $V\to G$ satisfying
	\begin{equation}
		\label{smooth_division}
		Q(u)=R(u)(P(u))\rlap{\qquad\qquad$\forall\,u\in V$.}
	\end{equation}
	Also, by definition of the subset diffeology of $\level$ in $X$, $j\circ P_{|V}$ and $j\circ Q_{|V}$ are smooth maps $V\to X$, taking values in $\level$. So all the successive variables
	\begin{equation}
		g=R(u),\qquad
		x=j(P(u)),\qquad
		y=g(x)=j(Q(u)),\qquad
		\mu = \Phi(x)
	\end{equation}
	are ordinary smooth functions of $u\in V$, valued respectively in $G$, $X$, $X$, and~$\LG^*$. Hence, derivatives of these functions (and left translation by $g\inv$) will map each $\delta u\in T_uV$ to vectors $\delta g$, $\delta x$, $\delta y$, $\delta\mu$ and an element $Z:=g\inv\delta g\in\LG$. Computing again as in (\ref{action_map_derivative}, \ref{pull_back_computation}) (but with $\delta g = gZ$), we get $\delta y =g_*(\delta x + Z_X(x))$ and
	\begin{equation}
		\label{other_pull_back_computation}
		\begin{aligned}
			(Q^*j^*\omega)(\delta u,\delta'u)
			&=\omega(\delta y,\delta'y)\\
			&=\omega(\delta x + Z_X(x),\delta'x+Z'_X(x))\\
			&=\omega(\delta x,\delta'x) 
			 +\<\delta\mu,Z'\> 
			 -\<\delta'\mu,Z\>
			 + \<\mu,[Z',Z]\>\\
			&=\omega(\delta x,\delta'x)\\
			&=(P^*j^*\omega)(\delta u,\delta'u).
		\end{aligned}
	\end{equation}
	Here the third equality is because $\Phi$ is an (equivariant) moment map, and the fourth is because $P(U)\subset\Phi\inv(0)$. In particular \eqref{other_pull_back_computation} holds for our pair $\delta u_0,\delta'u_0$, which was arbitrary in $TU$. So \eqref{desired_equal_pull_backs} is proved, and we have our reduced $2$-form. To get the reduced $1$-form on $\tilde X/\!\!/G$, argue \emph{mutatis mutandis} the same, with now (in the notation of §§\ref{prequantum} and \ref{prequantum_Frobenius})
	\begin{align}
		(Q^*j^*\varpi)(\delta u)
		&=\varpi(\delta\tilde y)\notag\\
		&=\varpi(\delta\tilde x + Z_{\tilde X}(\tilde x))\notag\\
		&=\varpi(\delta\tilde x) + \<\Phi(\tilde x),Z\>\\
		&=\varpi(\delta\tilde x)\notag\\
		&=(P^*j^*\varpi)(\delta u).\notag\qedhere
	\end{align}
\end{proof}

\section{When the $G$-action on \texorpdfstring{$\Phi\inv(0)$}{Φ-1(0)} is locally free}\label{s:regular}

\begin{theo}
	\label{reduced_forms_at_regular_values}
	In \textup{(\ref{reduced_space}/\ref{prequantum_reduced_space}),} suppose that $G$ is connected and the $G$-action on $\level=\Phi\inv(0)$ is locally free. Then $X/\!\!/G$ carries a reduced $2$-form \textup(resp.~$\tilde X/\!\!/G$ carries a reduced $1$-form\textup).
\end{theo}

\begin{proof}
	In \eqref{reduced_space}, $G$ acting locally freely on $\level$ means $\LG_x=\{0\}$ for all $x\in\level$. Then (\ref{cardinal}b) shows that $0$ is a regular value of $\Phi$, so $\level$ is a manifold, and the $G$\nobreakdash-orbits in $\level$ have tangent spaces $\LG(x)$ of constant dimension $\dim(G)$. Hence they are the leaves of a \emph{foliation} $\mathscr F$ of $\level$: see \cite[9.3.3(iv)]{Bourbaki:1967}, or the direct construction of foliating charts in \cite[p.\,13]{Ban:2006}. Furthermore, (\ref{cardinal}a) and $\Ker(D\Phi(x))=T_x\level$ show that $\omega_{|\level}$ is \emph{basic}, i.e., $G$-invariant with $\LG(x)\subset\Ker(\omega_{|\level})$ for all $x\in \level$. So \cite[Thm 3.5]{Hector:2011} applies and shows that $\omega_{|\level}\ (=j^*\omega)$ is the pull-back of a diffeological $2$-form on $\level/\mathscr F=\level/G=X/\!\!/G$, as desired.
	
	In \eqref{prequantum_reduced_space}, $G$ acting locally freely on $\level$ means $\LG_{\tilde x}=\{0\}$ for all $\tilde x\in\level$. Now, writing $x$ for the circle $\TT(\tilde x)$ (§\ref{prequantum}), one knows generally that $G_x$ acts in $\TT(\tilde x)$ via a character $\chi:G_x\to\TT$ with differential $D\chi(e) = \i\Phi(\tilde x)_{|\LG_x}$ (see e.g.~\cite[4.3d]{Souriau:1988}). Thus we have $\LG_{\tilde x}=\Ker D\chi(e)\subset\LG_x$, which when $\tilde x\in\level$ implies $\LG_x = \LG_{\tilde x} = \{0\}$ and therefore
	$
		\Im(D\Phi(\tilde x))=
		\Im(D\underline\Phi(x))=
		\ann(\LG_x)=
		\LG^*;
	$
	here we have applied (\ref{cardinal}b) to the moment map $\underline\Phi$ defined after \eqref{prequantum_moment}. So again $0$ is a regular value of $\Phi$, $\level$ is a manifold, and the $G$-orbits in $\level$ are the leaves of a foliation. Moreover \eqref{prequantum_moment} shows at once that $\varpi_{|\level}$ is basic, i.e., $G$-invariant with $\LG(\tilde x)\subset\Ker(\varpi_{|\level})$ for all $\tilde x\in \level$. So again \cite{Hector:2011} applies and gives the desired conclusion.
\end{proof}

\section{When the $G$-action on $X$ is proper}\label{s:proper}

Recall that the action of a Lie group on a manifold is \emph{proper} if $\thetaup$ in \eqref{factorization} is a proper map, i.e., compact subsets have compact preimages.  

\begin{theo}
	\label{reduced_forms_for_proper_actions}
	In \textup{(\ref{reduced_space}/\ref{prequantum_reduced_space}),} suppose the $G$-action on $X$ \textup(resp.~$\tilde X$\textup) is proper. Then $X/\!\!/G$ carries a reduced $2$-form \textup(resp.~$\tilde X/\!\!/G$ carries a reduced $1$-form\textup).
\end{theo}

\begin{proof}[Proof \textup(symplectic case\textup)]
	Let $j$, $\pi$, and $\Phi\inv(0)=\level$ be as in \eqref{reduction_diagram}. We must prove \eqref{desired_equal_pull_backs} for any two plots $P,Q:U\to\level$ with $\pi\circ P=\pi\circ Q$. To this end, we consider the following commutative diagram (after \cite[3.41]{Watts:2012}):
	\begin{equation}
		\label{strata}
	  	\begin{tikzcd}[row sep=large,column sep=large,every label/.append style={font=\scriptsize}]
	          & U \ar[r,"P"] 
			  & \level \ar[r,hook,"j"] 
			  & X \\
	            V_t\ar[r,hook]
			  & U_t\ar[u,hook]\ar[r,"P_{|U_t}"]
			  & \level_t\ar[u,hook]\ar[r,hook]
			  & X_t\ar[u,hook]\rlap{.}
	  	\end{tikzcd}
	\end{equation}
	Here $t$ is an \emph{orbit type}, i.e.~a conjugacy class of closed subgroups of $G$, and $X_t:=\{x\in\nolinebreak X:G_x\in t\}$ is the resulting \emph{orbit type piece}; $\level_t$ and $U_t$ are its preimages under $j$ and $P$, i.e.~$\level_t = \level\cap X_t$ and $U_t = P\inv(\level_t)$; and we define $V_t=U_t\cap\interior(\closure(U_t))$, closure and interior taken in the euclidean topology of~$U$. Note that $\pi\circ P = \pi\circ Q$ implies that $U_t$ is also $Q\inv(\level_t)$.
	
	One knows that the orbit type \emph{partitions} $X$ into locally finitely many such $X_t$, all of which are $G$-invariant embedded submanifolds (hence locally closed \cite{Bourbaki:1967}): see \cite[IX.9.4, Thm 2]{Bourbaki:1982} and \cite[3.5.2]{Wall:2016}. (Wall actually proves local finiteness of a finer partition; we follow \cite[5.1.8]{Bourbaki:1967} in allowing (sub)manifolds with connected components of different dimensions.) In particular, by shrinking $U$ to the preimage of some open neighborhood of $j(P(u_0))$ if necessary, we can assume without loss that only finitely many $U_t$ are nonempty.
		
	Moreover, by \cite[Thm 2.1]{Sjamaar:1991} for compact $G$ and \cite{Bates:1997} or \cite[Thm 8.1.1]{Ortega:2004a} in general, the $\level_t$ are also $G$-invariant embedded submanifolds of $X$, and the resulting $\level_t/G$ are \emph{symplectic manifolds}, carrying $2$-forms $\omega_t$ characterized by
	\begin{equation}
		\label{reduced_2_forms}
		(j_{|\level_t})^*\omega = (\pi_{|\level_t})^*\omega_t^{\phantom*}.
	\end{equation}
	Now if the $U_t$ were all open, we would be done. Indeed, $P_{|U_t}$ and $Q_{|U_t}$ would then be \emph{plots} of $\level$ with values in $\level_t$, i.e., plots of the subset diffeology of $\level_t$ (in $\level$ or equivalently in $X$ \cite[1.35]{Iglesias-Zemmour:2013}), i.e., ordinary smooth maps from $U_t$ to the manifold $\level_t$ \cite[4.1, 4.3C]{Iglesias-Zemmour:2013}. Pulling \eqref{reduced_2_forms} back by both, we would get, on the left, $((j\circ P)_{|U_t})^*\omega$ and $((j\circ Q)_{|U_t})^*\omega$, and on the right, $((\pi\circ P)_{|U_t})^*\omega_t^{\phantom*}$ and $((\pi\circ Q)_{|U_t})^*\omega_t^{\phantom*}$, which are equal since $\pi\circ P=\pi\circ Q$. So we would conclude equality of the left-hand sides, i.e.,
	\begin{equation}
		\label{equal_on_subsets}
		(j\circ P)^*\omega
		\quad\text{and}\quad
		(j\circ Q)^*\omega
		\quad\text{coincide on every $U_t$},
	\end{equation}
	whence the desired equality \eqref{desired_equal_pull_backs} on the union $U$ of the $U_t$.
	
	But of course the $U_t$ need not be open. Instead, the $V_t$ will do, for (we~claim)
  	\begin{equation}
  		\text{(a) each $V_t$ is open in $U$,}
  		\qquad
  		\text{(b) their union is dense in $U$.}
  	\end{equation}
	To see (b), note that the closures $\closure(U_t)$ cover $U$: therefore, by \cite[3.11]{Watts:2012} or \cite[7.3]{Choquet:1969}, the interiors $\interior(\closure(U_t))$ have dense union. So it suffices to show that $V_t$ is dense in $\interior(\closure(U_t))$, i.e., any nonempty open $O\subset\interior(\closure(U_t))$ meets $V_t$: this is true because $O\cap V_t$ is just $O\cap U_t$, which is nonempty by openness of $O$ and density of $U_t$ in $\closure(U_t)$. To see (a), note that $U_t$, preimage of the locally closed set $X_t$ by the continuous map $j\circ P$, is locally closed \cite[I.3.3]{Bourbaki:1960}: i.e., $U_t$ is open in $\closure(U_t)$, endowed with its \emph{subspace topology $\sigma$ inside $U$}. It follows that $V_t$ is open in $\interior(\closure(U_t))$, endowed with its \emph{subspace topology $\tau$ inside $\closure(U_t)$}. But $\tau$ is also the subspace topology directly inside $U$ \cite[I.3.1]{Bourbaki:1960}; so (since $\interior(\closure(U_t))$ is euclidean-open) $\tau$-open simply means euclidean-open in $U$; whence (a).
	
	Now we can repeat the argument that led to \eqref{equal_on_subsets} with $V_t$ in place of $U_t$ and get (this time correctly) that $(j\circ P)^*\omega$ and $(j\circ Q)^*\omega$ coincide on the open dense union of the $V_t$, hence everywhere on $U$ by continuity. So \eqref{desired_equal_pull_backs} is proved, and we have our reduced $2$-form.
\end{proof}
	
\begin{proof}[Proof \textup(prequantum case\textup)]
		Let $j$, $\pi$, and $\Phi\inv(0)=\level$ be as in \eqref{prequantum_reduction_diagram}, and for simplicity, rename $\tilde X$ as just $X$ (i.e., drop all tildes in~§\ref{prequantum}). We must prove $P^*j^*\varpi=Q^*j^*\varpi$ for any two plots $P,Q:U\to\level$ with $\pi\circ P=\pi\circ Q$. To this end, let $\bigsqcup_t X_t$ and $\bigsqcup_t\level_t$ be the orbit type partitions of $X$ and $\level$, so that we again have a diagram \eqref{strata}. Following the suggestion in \cite[2.17]{Lerman:2001}, we consider the \emph{symplectization} of $(X,\varpi)$, i.e.~the Hamiltonian $G$-space $(\check X,\check\omega,\check\Phi)$ where $\check X=\RR\times X$ with (proper) $G$-action $g(s,x)=(s,g(x))$, $2$-form $\check\omega = d(\e{s}\varpi)$, and resulting moment map $\check\Phi(s,x) = \e{s}\Phi(x)$. Clearly, its zero level and orbit type pieces are simply the product of $\RR$ with those of $X$, i.e.
	\begin{equation}
		\check\level = \RR\times\level,
		\qquad
		\check X_t = \RR\times X_t,
		\qquad
		\check\level_t = \RR\times\level_t. 
	\end{equation}
	By the symplectic theory expounded before \eqref{reduced_2_forms}, the $\check X_t$ and $\check\level_t$ are $G$-invariant embedded submanifolds of $\check X$, and the reduced pieces $\check\level_t/G$ are symplectic manifolds, carrying $2$-forms $\check\omega_t$ characterized by
	\begin{equation}
		\label{reduced_symplectized_forms}
		(\id\times j_{|\level_t})^*\check\omega =
		(\id\times\pi_{|\level_t})^*\check\omega_t^{\phantom*}.
	\end{equation}
	It follows that $X_t$, $\level_t$ and $\level_t/G$ are manifolds. Moreover, we claim that $\smash{(j_{|\level_t})^*\varpi}$ descends to $\level_t/G$. To see this, note that $\varpi$ can be recovered from $\check\omega$ as
	\begin{equation}
		\label{recovering_varpi}
		\varpi = I^*\i_R\check\omega
	\end{equation}
	where $I:X\to\check X$ is the embedding $x\mapsto(0,x)$, and $\i_R$ is interior product with the constant vector field $R(s,x)=(1,0)$. Then consider the commutative diagram
	\begin{equation}
		\label{four_squares}
	  	\begin{tikzcd}[row sep=large,column sep=large,every label/.append style={font=\scriptsize}]
	            X
				\ar[r,"I"] 
			  & \check X=\RR\times X
			    \ar[r,"R"] 
			  & T\check X
			  \\
			    \level_t
				\ar[r,"I_t"]
				\ar[u,hook,"j_{|\level_t}"]
				\ar[d,swap,"\pi_{|\level_t}"]
			  & \check\level_t=\RR\times\level_t
			    \ar[r,"R_t"]
				\ar[u,hook,swap,"\id\times j_{|\level_t}"]
			    \ar[d,"\id\times\pi_{|\level_t}"]
			  & T\check\level_t
			    \ar[u,hook,swap,"(\id\times j_{|\level_t})_*"]
			    \ar[d,"(\id\times\pi_{|\level_t})_*"]
			  \\
			    \level_t/G
				\ar[r,"J_t"]
			  & \check\level_t/G = \RR\times(\level_t/G)
			    \ar[r,"S_t"]
			  & T(\check\level_t/G)\rlap{,}
	  	\end{tikzcd}
	\end{equation}
	where $I_t$, $J_t$ are again the embeddings $(0,\cdot)$, and $R_t$, $S_t$ the vector fields with constant value $(1,0)$. Note that commutativity of the two squares on the right means that $R$, $R_t$, $S_t$ are related, where we call two vector fields $R:A\to TA$ and $S:B\to TB$ \emph{related} by a smooth map $F:A\to B$ if $F_*(R(a))=S(F(a))$; then one has $\i_RF^*=F^*\i_S$. With that in mind, we can compute
	\begin{equation}
		\label{reduced_1_forms}
		\begin{aligned}
			(j_{|\level_t})^*\varpi
			& = (j_{|\level_t})^*I^*\i_R\check\omega
			&&\text{by \eqref{recovering_varpi}}
			\\
			& = I_t^*(\id\times j_{|\level_t})^*\i_R\check\omega
			&&\text{by commutativity of \eqref{four_squares}}
			\\
			& = I_t^*\i_{R_t}(\id\times j_{|\level_t})^*\check\omega
			&&\text{by relatedness of }R, R_t
			\\
			& = I_t^*\i_{R_t}(\id\times\pi_{|\level_t})^*\check\omega_t^{\phantom*}
			&&\text{by \eqref{reduced_symplectized_forms}}
			\\
			& = I_t^*(\id\times\pi_{|\level_t})^*\i_{S_t}\check\omega_t^{\phantom*}
			&&\text{by relatedness of }R_t, S_t
			\\
			& = (\pi_{|\level_t})^*J_t^*\i_{S_t}\check\omega_t^{\phantom*}
			&&\text{by commutativity of \eqref{four_squares}}
			\\
			& = (\pi_{|\level_t})^*\varpi_t^{\phantom*}
			&&\text{where we define }
			\varpi_t^{\phantom*} = J_t^*\i_{S_t}\check\omega_t^{\phantom*}.
		\end{aligned}
	\end{equation}
	This proves our claim that $(j_{|\level_t})^*\varpi$ is the pull-back of a (unique, ordinary) $1$-form on the manifold $\level_t/G$. Now, using \eqref{reduced_1_forms} just as we used \eqref{reduced_2_forms} earlier, we conclude by the exact same argument that $P^*j^*\varpi = Q^*j^*\varpi$, as desired.
\end{proof}

\begin{coro}\label{sjamaar-lerman-bates}
	In \eqref{reduced_forms_for_proper_actions}\textup, the reduced $2$-form $\omega_{X/\!\!/G}$ on $X/\!\!/G=\level/G$ restricts to the Sjamaar--Lerman--Bates $2$-form $\omega_t$ \eqref{reduced_2_forms} on each reduced piece $\level_t/G$. Likewise\textup, the reduced $1$-form $\varpired$ induces $\varpi_t$ \eqref{reduced_1_forms} on each reduced piece.
\end{coro}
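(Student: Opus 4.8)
The plan is to read ``restricts to'' as the precise statement $\iota_t^*\omega_{X/\!\!/G}=\omega_t$, where $\iota_t:\level_t/G\to\level/G$ is the canonical inclusion of the Sjamaar--Lerman--Bates stratum, regarded as a smooth map \emph{out of the manifold} $\level_t/G$. To set this up I would first assemble the relevant maps from \eqref{strata} and \eqref{reduced_2_forms}: the inclusion $k_t:\level_t\hookrightarrow\level$, the restriction $j_{|\level_t}=j\circ k_t$, the quotient $\pi_{|\level_t}:\level_t\to\level_t/G$, and $\iota_t$. They satisfy the factorization $\pi\circ k_t=\iota_t\circ\pi_{|\level_t}$, both sides sending $x\in\level_t$ to its $G$-orbit viewed inside $\level/G$. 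Two smoothness facts must here be checked \emph{diffeologically}, since in this proper, non-free setting $\level$ need not be a manifold: (i) $k_t$ is smooth, by the universal property of the induction $j$ \cite[1.34]{Iglesias-Zemmour:2013} applied to the submanifold inclusion $j_{|\level_t}:\level_t\hookrightarrow X$; and (ii) $\iota_t$ is smooth out of the manifold $\level_t/G$, by the universal property of the subduction $\pi_{|\level_t}$ \cite[1.51]{Iglesias-Zemmour:2013} applied to the smooth map $\pi\circ k_t$. With (ii) in hand, $\iota_t^*\omega_{X/\!\!/G}$ is a well-defined ordinary $2$-form on the manifold $\level_t/G$, so the equality to be proved makes literal sense.

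The heart of the argument is then a short chase. Pulling the defining relation $j^*\omega=\pi^*\omega_{X/\!\!/G}$ of Definition \ref{reduced_2-form} back along $k_t$ and using the factorization gives
\begin{equation*}
	(\pi_{|\level_t})^*\iota_t^*\omega_{X/\!\!/G}=(\pi\circ k_t)^*\omega_{X/\!\!/G}=k_t^*j^*\omega=(j_{|\level_t})^*\omega .
\end{equation*}
On the other hand the Sjamaar--Lerman--Bates characterization \eqref{reduced_2_forms} reads $(j_{|\level_t})^*\omega=(\pi_{|\level_t})^*\omega_t$. Comparing, we obtain $(\pi_{|\level_t})^*\iota_t^*\omega_{X/\!\!/G}=(\pi_{|\level_t})^*\omega_t$, and since $\pi_{|\level_t}$ is a surjective submersion, hence a subduction, pull-back by it is injective \cite[6.39]{Iglesias-Zemmour:2013}. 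Cancelling yields $\iota_t^*\omega_{X/\!\!/G}=\omega_t$, as desired.

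The prequantum case I would treat \emph{verbatim} the same, replacing $\omega$, $\omega_{X/\!\!/G}$, $\omega_t$ by $\varpi$, $\varpired$, $\varpi_t$, with the role of \eqref{reduced_2_forms} now played by the last line of the computation \eqref{reduced_1_forms}, namely $(j_{|\level_t})^*\varpi=(\pi_{|\level_t})^*\varpi_t$. I expect the only genuine subtlety to be the one addressed in the first paragraph: because $\level$ is not a manifold, every smoothness claim must be routed through the universal properties of inductions and subductions rather than ordinary calculus, and ``restriction'' must be understood as pull-back through the smooth inclusion $\iota_t$ of the \emph{manifold} stratum --- not as pull-back of a subset diffeology on $\level_t/G$, whose coincidence with the manifold diffeology is precisely the open `Klein strata' question flagged in the Introduction. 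The argument above neatly sidesteps that question, needing only that $\iota_t$ be smooth, which (ii) supplies.
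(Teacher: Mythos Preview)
Your proof is correct and follows essentially the same route as the paper's: both set up the commutative square $\pi\circ k_t=\iota_t\circ\pi_{|\level_t}$ (the paper writes $j_t,i_t$ for your $k_t,\iota_t$), pull back the defining relation $j^*\omega=\pi^*\omega_{X/\!\!/G}$ along $k_t$, compare with \eqref{reduced_2_forms}, and cancel the injective $(\pi_{|\level_t})^*$.

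Your version is in one respect more careful than the paper's. The paper simply calls $i_t$ an ``induction'', which would mean the manifold diffeology on $\level_t/G$ coincides with the subset diffeology inside $\level/G$ --- precisely the open `Klein strata' question you flag. You correctly observe that the argument needs only that $\iota_t$ be \emph{smooth} out of the manifold, which you derive from the universal property of the submersion $\pi_{|\level_t}$. Correspondingly, you justify injectivity of $(\pi_{|\level_t})^*$ via ``surjective submersion $\Rightarrow$ subduction'' on the manifold target, whereas the paper invokes \eqref{restricted_subduction}, which literally yields a subduction onto the \emph{subset} diffeology. Both routes reach the same conclusion, but yours cleanly sidesteps the ambiguity about which diffeology $\level_t/G$ carries.
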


\begin{proof}
	In \eqref{reduced_2_forms}, we have regarded $j_{|\level_t}$ as a map $\level_t\to X$ but, slightly abusively, $\pi_{|\level_t}$ as a map $\level_t\to\level_t/G$ (rather than $\level_t\to\level/G$). With this asymmetry understood and continued, we have the commutative diagram
	\begin{equation}
		\label{induced_forms_on_strata_diagram}
		\begin{tikzcd}[row sep=large,column sep=large,every label/.append style={font=\small}]
			  \level/G
			& \level
			  \ar[l,swap,"\pi"]
			  \ar[r,hook,"j"]
			& X
			\\
			  \level_t/G
			  \ar[u,hook,"i_t"]
			& \level_t
			  \ar[l,swap,"\pi_{|\level_t}"]
			  \ar[u,hook,"j_t"]
			  \ar[ur,swap,"j_{|\level_t}"]
		\end{tikzcd}
	\end{equation}
	where the inductions $i_t$ and $j_t$ are newly named. We must prove $\omega_t^{\phantom*}=i_t^*\omega_{X/\!\!/G}$. We have:
	\begin{equation}
		\label{induced_forms_on_strata_diagram_computation}
		\begin{aligned}
			(\pi_{|\level_t})^*\omega_t
			& = (j_{|\level_t})^*\omega
			&&\text{by \eqref{reduced_2_forms}}
			\\
			& = j_t^*j^*\omega
			&&\text{by commutativity of \eqref{induced_forms_on_strata_diagram}}
			\\
			& = j_t^*\pi^*\omega_{X/\!\!/G}
			&&\text{by definition \eqref{reduced_2-form}}
			\\
			& = (\pi_{|\level_t})^*i_t^*\omega_{X/\!\!/G}
			&&\text{by commutativity of \eqref{induced_forms_on_strata_diagram}}.
		\end{aligned}
	\end{equation}
	As $\pi_{|\level_t}$ is a subduction \eqref{restricted_subduction}, $(\pi_{|\level_t})^*$ is injective \cite[6.39]{Iglesias-Zemmour:2013}, so \eqref{induced_forms_on_strata_diagram_computation} implies the desired equality. The proof that $\varpi_t^{\phantom*}=i_t^*\varpired$ is just the same, with \eqref{reduced_2-form} and \eqref{reduced_2_forms} replaced by \eqref{reduced_1-form} and \eqref{reduced_1_forms}.
\end{proof}

\section{Application: Induction from non-closed subgroups}\label{s:non-closed}

We apply the results of Section~\ref{s:strict} to the quotient of a Lie group $G$ by a non-closed subgroup $H$, the latter endowed with its canonical Lie group structure (\ref{principal}a). Then although $G/H$ is not a manifold (e.g.~if $G$ is a 2-torus and $H$ an irrational winding, we get the famous $\TT_\alpha$), we can still  perfectly well perform the construction (\ref{induced_manifold}/\ref{prequantum_induction}) of $\IND HGY$, for any Hamiltonian or prequantum $H$-space $Y$. The result is not a manifold, and yet:

\begin{coro}
	\label{reduced_form_on_induced_spaces}
	In \textup{(\ref{s-induced}/\ref{p-induced}),} $\IND HGY$ carries a reduced $2$-form \textup(resp.~$1$-form\textup) even when $H$ is not closed.
\end{coro}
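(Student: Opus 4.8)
The corollary claims that $\IND HGY$ carries a reduced form (2-form in the symplectic case, 1-form in the prequantum case) even when $H\subset G$ is not closed. Recall from (\ref{induced_manifold}/\ref{prequantum_induction}) that $\IND HGY = (T^*G\times Y)/\!\!/H = \psi\inv(0)/H$, where $H$ acts on the product $L = T^*G\times Y$ via $(g,h)(p,y)=(gph\inv,h(y))$ restricted to $\{e\}\times H$, i.e. $h(p,y)=(ph\inv,h(y))$. The key structural observation is that this action involves the *right* action of $H$ on the $T^*G$ factor, and *right* translation actions of a subgroup on a group are exactly the principal (hence strict) actions singled out in (\ref{principal}a). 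So the plan is to reduce the corollary to Theorem \ref{reduced_forms_for_strict_actions} by verifying that the $H$-action on the zero level $\level = \psi\inv(0)$ is strict.

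**Main steps.** First I would recall the criterion: by Theorem \ref{reduced_forms_for_strict_actions}, it suffices to show the $H$-action on $\level=\psi\inv(0)$ is strict, i.e. that the Bourbaki map $\thetaup\colon H\times\level\to\level\times\level$, $\thetaup(h,c)=(c,h(c))$, is a strict map in the sense of Definition \ref{strict}. Since the $H$-action here is \emph{free} (it is free already on the $T^*G$ factor, because right translation of $H$ on $G$ is free), by (\ref{principal}) strictness is equivalent to $\thetaup$ being an \emph{induction}. So I would aim to show $\thetaup$ is an induction. The natural strategy is to exhibit a smooth \emph{division map}: given any plot $P\times Q\colon V\to\level\times\level$ taking values in $\Gamma=\thetaup(H\times\level)$ (which records pairs $(c,c')$ lying in the same $H$-orbit), I must locally smoothly recover the unique $h$ with $Q=h(P)$ as a plot $V\to H$. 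Writing $P(u)=(p(u),y(u))$ with $p(u)\in T^*_{q(u)}G$ and $Q(u)=(p'(u),y'(u))$ with $p'(u)\in T^*_{q'(u)}G$, the relation $Q=h\cdot P$ forces $q'=qh\inv$, hence $h=q'^{-1}q$ at the level of $G$. The point is that $q,q'\colon V\to G$ are ordinary smooth functions (as the compositions of $P,Q$ with the smooth bundle projection $T^*G\to G$ and the plot-smoothness of $P,Q$), so $u\mapsto q'(u)^{-1}q(u)$ is a smooth map $V\to G$ \emph{landing in $H$}; and because $H$ carries its canonical Lie group structure, by (\ref{principal}a) and \cite[8.15]{Iglesias-Zemmour:2013} this is automatically a \emph{plot of $H$}. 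This $h$ is the required local lift, and its uniqueness comes from freeness. This shows $\thetaup$ is an induction, hence the action is strict.

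**The main obstacle.** The crux, and the only genuinely non-manifold subtlety, is precisely the claim that a smooth map $V\to G$ with values in the non-closed subgroup $H$ is a \emph{plot} of $H$ for its canonical (diffeological) Lie group structure. For \emph{closed} $H$ this is the ordinary submanifold fact, but for non-closed $H$ one must use (\ref{principal}a): the right action of $H$ on $G$ is principal, i.e. $H\times G\to G\times G$, $(h,g)\mapsto(g,gh\inv)$, is an induction, which is exactly what converts the smooth $G$-valued map $q'^{-1}q$ into an $H$-plot. I expect this to be where the argument must be handled with care, since it is the single place the hypothesis ``$H$ not closed'' is felt; everything else is the same bookkeeping as in the standard (closed $H$) case.

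**Both cases at once.** Finally I would observe that the symplectic and prequantum statements are handled uniformly: in both (\ref{s-induced}) and (\ref{p-induced}) the $H$-action on the underlying manifold acts by right translation on the $T^*G$ factor and is free, so the identical strictness argument applies verbatim; Theorem \ref{reduced_forms_for_strict_actions} then delivers a reduced $2$-form in the symplectic case and a reduced $1$-form in the prequantum case. Hence $\IND HGY$ carries a reduced form regardless of whether $H$ is closed.
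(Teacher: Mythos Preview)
Your proposal is correct and follows the same overall strategy as the paper: reduce to Theorem~\ref{reduced_forms_for_strict_actions} by showing the $H$-action on $\psi^{-1}(0)$ is principal (hence strict), using Example~(\ref{principal}a) as the key input.

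The execution differs slightly. The paper observes that $T^*G$ is itself a Lie group containing $H$ as a subgroup, so (\ref{principal}a) applied to $H\subset T^*G$ gives principality of the $H$-action on $T^*G$ directly; it then invokes a separate lemma (\ref{products_and_subsets}) to propagate principality first to the diagonal action on $T^*G\times Y$ and then to the invariant subset $\psi^{-1}(0)$. You instead apply (\ref{principal}a) to $H\subset G$ and pull the division problem down via the base projection $T^*G\to G$: the required $h=q'^{-1}q$ is read off at the level of $G$, and principality of $H$ acting on $G$ is precisely what makes this an $H$-plot. Your route is more hands-on and avoids isolating the auxiliary lemma; the paper's route is more modular and makes the two permanence properties (products, invariant subsets) available for reuse. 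Both are equally valid, and your identification of the ``main obstacle'' --- that principality, not mere smoothness into $G$, is what upgrades $q'^{-1}q$ to an $H$-plot --- is exactly the right point.
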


\begin{rema}
	\label{parasymplectic_Hamiltonian}
	This 2-form, together with the residual $G$-action and moment map an induced space also carries, make $\IND HGY$ a \emph{parasymplectic Hamiltonian $G$-space} in the sense of \cite{Iglesias-Zemmour:2010,Iglesias-Zemmour:2013,Iglesias-Zemmour:2016}; see \eqref{two_settings}.
\end{rema}

\begin{proof}
	This follows from \eqref{reduced_forms_for_strict_actions}. Indeed: $T^*G$ is a Lie group and $H\subset G\subset T^*G$ is a subgroup, so (\ref{principal}a) says that the $H$-action $h(p)=ph\inv$ on $T^*G$~is principal. By Lemma \eqref{products_and_subsets} below, it follows that the $H$-action $h(p,y)=(ph\inv,h(y))$ on $\psi\inv(0)\subset T^*G\times Y$ (\ref{s-induced}/\ref{p-induced}) is also principal, hence strict; so \eqref{reduced_forms_for_strict_actions} applies.
\end{proof}

\begin{lemm}
	\label{products_and_subsets}
	Let $G$ be a diffeological group acting smoothly on diffeological spaces $X_1$ and $X_2$.
	\begin{compactenum}[\upshape a)]
		\item If the $G$-action on $X_1$ is principal\textup, then so is the diagonal $G$-action on $X=X_1\times X_2$.
		\item If $X_2$ is a $G$-invariant subset of $X_1$ \textup(with subset diffeology\textup{)} and the $G$-action on $X_1$ is principal or strict\textup, then so is the restricted $G$-action on~$X_2$.
	\end{compactenum}
\end{lemm}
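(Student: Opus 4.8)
The plan is to treat the two parts separately, proving (a) by a direct plot computation and reducing (b) to Proposition~\ref{restricted_subduction}. Throughout, write $\thetaup_i:G\times X_i\to X_i\times X_i$, $(g,x)\mapsto(x,g(x))$, for the Bourbaki maps and $\Gamma_i=\thetaup_i(G\times X_i)$ for their images. As recorded after \eqref{factorization} and in Examples~\ref{principal}, the $G$-action on $X_i$ is \emph{strict} exactly when the corestriction $\thetaup_i:G\times X_i\to\Gamma_i$ (onto $\Gamma_i$ with its subset diffeology) is a subduction, and \emph{principal} exactly when it is moreover free, equivalently when $\thetaup_i$ is an induction.

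For part (a), freeness of the diagonal action on $X=X_1\times X_2$ is immediate: a $g$ fixing $(x_1,x_2)$ fixes $x_1$, hence equals $e$ by freeness on $X_1$. It remains to show the diagonal Bourbaki map $\thetaup:G\times X\to X\times X$ is an induction; being manifestly smooth and injective, it suffices to verify that any plot $P$ of $X\times X$ valued in $\Gamma=\thetaup(G\times X)$ has $\thetaup\inv\circ P$ a plot of $G\times X$. Writing $P=((a,b),(c,d))$ with $a,c$ plots of $X_1$ and $b,d$ plots of $X_2$, the values in $\Gamma$ determine, by freeness, a unique $g$ with $c=g(a)$ and $d=g(b)$, and $\thetaup\inv\circ P=(g,a,b)$. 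The key point is that $(a,c)$ is then a plot of $X_1\times X_1$ valued in $\Gamma_1$; since $\thetaup_1$ is an induction, $\thetaup_1\inv(a,c)=(g,a)$ is a plot of $G\times X_1$, so $g$ is a plot of $G$ and therefore $(g,a,b)$ is a plot of $G\times X$. Hence $\thetaup$ is an induction and the diagonal action is principal.

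For part (b), the essential observation is that strictness passes to $G$-invariant subsets by \emph{direct} appeal to Proposition~\ref{restricted_subduction}. Taking $s=\thetaup_1:G\times X_1\to\Gamma_1$ (a subduction, by strictness on $X_1$) and $B=\Gamma_2\subset\Gamma_1$, I would note that $G$-invariance of $X_2$ gives $\Gamma_2=\Gamma_1\cap(X_2\times X_2)$ and, crucially, that $s\inv(\Gamma_2)=G\times X_2$: indeed $\thetaup_1(g,x)=(x,g(x))\in\Gamma_2$ forces $x\in X_2$, whereupon $g(x)\in X_2$ automatically. Proposition~\ref{restricted_subduction} then makes $s_{|G\times X_2}=\thetaup_2:G\times X_2\to\Gamma_2$ a subduction, which is exactly strictness of the $G$-action on $X_2$. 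The principal case follows at once: freeness trivially restricts to the invariant subset and strictness restricts by the above, so principal ($=$ free $+$ strict) restricts as well.

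I expect the main subtlety to be bookkeeping of subset diffeologies in part (b): to invoke Proposition~\ref{restricted_subduction} verbatim one must check that the subset diffeology of $G\times X_2$ inside $G\times X_1$ coincides with its product diffeology (product of subset diffeologies being the subset diffeology of the product), and that the subset diffeology of $\Gamma_2$ inside $\Gamma_1$ agrees with the one it inherits from $X_2\times X_2$ (both being a subset diffeology of a subset diffeology, hence a single subset diffeology of $X_1\times X_1$). These are routine but genuinely needed to match the hypotheses of \ref{restricted_subduction}. In part (a), by contrast, the only real content is the recovery of $g$ as a plot from the induction property of $\thetaup_1$, which is precisely where principality of the $X_1$-action is used.
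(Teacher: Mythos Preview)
Your proof is correct. Part (a) is essentially the paper's argument: both recover the group element $g$ as a plot by projecting to the $X_1$-component and invoking principality there (the paper phrases this via the ``smooth division'' criterion \eqref{smooth_division}, you via the induction characterization, which are equivalent in the free case).

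Part (b) takes a genuinely different route. The paper argues directly with plots: given $P_2\times Q_2:U\to X_2\times X_2$ valued in $\Gamma_2$, it composes with the inclusion $X_2\hookrightarrow X_1$, uses strictness on $X_1$ to produce a local plot $R:V\to G$ with $Q_1=R\cdot P_1$, and observes this same $R$ works for $X_2$. You instead package this as a single application of Proposition~\ref{restricted_subduction} to the subduction $\thetaup_1:G\times X_1\to\Gamma_1$ and the subset $B=\Gamma_2$, after checking $\thetaup_1\inv(\Gamma_2)=G\times X_2$. Your approach is cleaner and makes the role of Proposition~\ref{restricted_subduction} explicit; the paper's is more self-contained but in effect re-derives that proposition in situ. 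The diffeology bookkeeping you flag (product-of-subsets $=$ subset-of-product; transitivity of subset diffeologies for $\Gamma_2\subset\Gamma_1\subset X_1\times X_1$ versus $\Gamma_2\subset X_2\times X_2\subset X_1\times X_1$) is exactly what is needed and is indeed routine.
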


\begin{proof}
	(a) First of all, $G$ acting freely on $X_1$ implies the same on $X$. So we need only show that the $G$-action on $X$ is strict. For that (see \ref{smooth_division}), let $P\times Q: U\to X\times X$ be a plot taking values in the graph $\Gamma=\{(x,y):G(x)=G(y)\}$. Since $G$ acts freely on $X$, there is a \emph{unique} (possibly non-smooth) $R:U\to G$ such that $Q(u)=R(u)(P(u))$. Writing $P\times Q=P_1\times P_2\times Q_1\times Q_2$, we see that this is also the unique $R$ such that $Q_1(u)=R(u)(P_1(u))$. Since $G$ acts strictly on $X_1$, this $R$ is \emph{smooth} in an open neighborhood of any $u_0\in U$. So we have \eqref{smooth_division}.
	
	(b) Pick a plot $P_2\times Q_2: U\to X_2\times X_2$ taking values in the graph $\Gamma_2=\{(x_2,y_2):G(x_2)=G(y_2)\}$, and a point $u_0\in U$. Composing $P_2$ and $Q_2$ with the inclusion $X_2\hookrightarrow X_1$, we get a plot $P_1\times Q_1: U\to X_1\times X_1$ taking values in $\Gamma_1=\{(x_1,y_1):G(x_1)=G(y_1)\}$. Since the $G$-action on $X_1$ is strict, there are an open neighborhood $V\ni u_0$, and a plot $R:V\to G$, such that \mbox{$Q_1(u)=R(u)(P_1(u))$} for all $u\in V$. Now this says $Q_2(u)=R(u)(P_2(u))$, i.e.~we have \eqref{smooth_division} for $X_2$. So~(b) is proved for `strict'. For `principal' just add the observation that $G$ acting freely on $X_1$ implies the same on $X_2$.
\end{proof}

\begin{coro}
	The reciprocity theorems \textup{(\ref{symplectic_frobenius}/\ref{prequantum_frobenius})} remain valid when $H$ is not assumed closed.
\end{coro}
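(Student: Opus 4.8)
The plan is to re-examine the proofs of \eqref{symplectic_frobenius} and \eqref{prequantum_frobenius} and pin down exactly where closedness of $H$ was used. Going through them, the construction of the spaces $M$ and $N$ (\ref{M_and_N}/\ref{prequantum_M_and_N}), of the maps $r,r',s,s'$ and the bijection $t$, the proof that $t$ is a diffeomorphism, and the pull-back computation \eqref{pull_back_computation} with its prequantum analogue, never invoke $H$ being closed: they rest only on the $G$- and $H$-\emph{actions} and on subset/quotient diffeologies. The \emph{sole} appeal to closedness is the claim --- justified via Marsden--Weinstein in (\ref{s-induced}/\ref{p-induced}) --- that $M/\!\!/H$ carries a reduced form; this is what lets one pass from \eqref{desired_pull_back} to \eqref{key_pull_back}, and it is invoked again in the final diagram chase that transfers a reduced form from one side to the other. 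So it suffices to show that $M/\!\!/H$ carries a reduced form for \emph{arbitrary} $H$; the two proofs then apply verbatim.

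For the symplectic case this is exactly the argument of \eqref{reduced_form_on_induced_spaces}, now run on the larger space $M$. The $H$-action on $M=X^-\times T^*G\times Y$ is $h(x,p,y)=(x,ph\inv,h(y))$. Since $H\subset G\subset T^*G$ is a subgroup, the action $h(p)=ph\inv$ on $T^*G$ is principal (\ref{principal}a); writing $M=T^*G\times(X^-\times Y)$ with $H$ acting diagonally, Lemma~\eqref{products_and_subsets}(a) makes the $H$-action on $M$ principal, and Lemma~\eqref{products_and_subsets}(b) keeps it principal on the invariant subset $\psi_M\inv(0)$. Being principal it is strict, so \eqref{reduced_forms_for_strict_actions} yields the reduced $2$-form on $M/\!\!/H$.

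The prequantum case follows the same route, once we free the principal $T^*G$-direction from inside the prequantum product. Here $M=\tilde X^-\boxtimes(T^*G\times\tilde Y)=\check M/\Delta$, with $\check M=\tilde X^-\times T^*G\times\tilde Y$ and $\Delta$ the antidiagonal circle \eqref{p-intertwiner}, acting by the Reeb actions on $\tilde X^-$ and $\tilde Y$ and \emph{trivially} on $T^*G$. Since quotienting by a group commutes with a product by a factor on which that group acts trivially, this identifies $M\cong T^*G\times N$ (with $N=\tilde X^-\boxtimes\tilde Y$) as diffeological $H$-spaces, the $H$-action being diagonal with principal factor $h(p)=ph\inv$ on $T^*G$. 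Lemma~\eqref{products_and_subsets}(a,b) again makes the $H$-action on $\psi_M\inv(0)$ strict, so \eqref{reduced_forms_for_strict_actions} supplies the reduced $1$-form on $M/\!\!/H$. (Alternatively, $H\times\Delta$ is a Lie group acting freely and smoothly on the manifold $\check M$, hence principally by (\ref{principal}b), and one deduces strictness of the $H$-action on $M=\check M/\Delta$ by lifting plots through the subduction $\check M\to M$ and using freeness to recover a smooth divisor as in \eqref{smooth_division}.)

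The one genuinely new point --- and where I expect the only real friction --- is the diffeological identity $M\cong T^*G\times N$ used in the prequantum case: that the $\Delta$-quotient commutes with the product by $T^*G$. This is not a formality about group actions but a statement about quotient diffeologies. I would prove it by observing that $\id_{T^*G}$ times the $\Delta$-quotient subduction is again a subduction, so its $\Delta$-invariant descent $M\to T^*G\times N$ is a subduction; being moreover a bijection whose inverse is smooth --- any plot of $T^*G\times N$ lifts locally through the $\Delta$-quotient --- it is a diffeomorphism. With that identity in hand, both reciprocity theorems extend word for word to non-closed $H$.
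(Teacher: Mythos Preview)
Your proposal is correct and follows essentially the same route as the paper: both identify the reduced form on $M/\!\!/H$ as the single place where closedness of $H$ entered, and both supply that form via the strictness argument behind \eqref{reduced_form_on_induced_spaces}. The paper's proof is a two-sentence sketch (``our proofs still work unchanged\dots they still carry the invoked reduced forms \eqref{reduced_form_on_induced_spaces}''), whereas you explicitly rerun the principal-action argument on $M$ itself and, in the prequantum case, justify the diffeological splitting $M\cong T^*G\times N$ needed to expose a principal factor --- details the paper leaves implicit.
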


\begin{proof}
	Our proofs still work unchanged. (The only difference is that $\IND HGY$ and $M/\!\!/H$ need no longer be manifolds; but they still carry the invoked reduced forms \eqref{reduced_form_on_induced_spaces}, residual $G$-actions, and moment maps \eqref{parasymplectic_Hamiltonian}.)
\end{proof}

\begin{s-induced-from-0}
	\label{s-induced-from-0}
	If $Y$ is the trivial coadjoint orbit~$\{0\}$ of $H$, the induced space \eqref{induced_manifold} boils down to the reduction of $T^*G$ by the cotangent lift of the `right' action of $H$, i.e.
	\begin{equation}
		\label{KMS_space}
		\IND HG\,\{0\} = (T^*G)/\!\!/H.
	\end{equation}
	\emph{If $H$ is closed in $G$} then it is very well known, and indeed the simplest case of Kummer--Marsden--Satzer ``cotangent bundle reduction'' \cite[Thm 2.2.2]{Marsden:2007}, that \eqref{KMS_space} is just $T^*(G/H)$, and under that identification, its reduced $2$-form is just the canonical cotangent bundle $2$-form (so it also inherits a potential $1$-form):
	\begin{equation}
		\label{cotangent_bundle_reduction}
		\omega_{(T^*G)/\!\!/H}=d\varpi_{T^*(G/H)}.
	\end{equation}
	\emph{What if $H$ is not closed}? Faced with a similar situation, the authors of \cite[\nolinebreak 3.3]{Lerman:1993} proposed to take \eqref{cotangent_bundle_reduction} as the \emph{definition} of $T^*(G/H)$ and its $2$-form. (Their reduced space was not diffeological but stratified, for  a proper action.) However, as they immediately observed, this entails the \emph{conjecture} that the result does not depend on the way we write $G/H$ as a quotient \cite[\nolinebreak 3.7]{Lerman:1993}. Here, in contrast, we have the benefit of Iglesias-Zemmour's intrinsic notion of the cotangent space $T^*(X)$ of any diffeological space, with its canonical $2$-form denoted $d\Liouv$ and a Hamiltonian action of $\operatorname{Diff}(X)$ \cite{Iglesias-Zemmour:2010,Iglesias-Zemmour:2013}. So the conjecture becomes the following, which we shall be content to prove when $H$ is \emph{dense}:
\end{s-induced-from-0}

\begin{theo}
	\label{diffeological_KMS}
	Let $G$ be a Lie group and $H$ any dense subgroup. Then the induced space \eqref{KMS_space} with its $2$-form \eqref{reduced_form_on_induced_spaces} and the cotangent space $T^*(G/H)$ of \textup{\cite{Iglesias-Zemmour:2013}} with its $2$-form $d\Liouv$ are isomorphic as parasymplectic Hamiltonian $G$\nobreakdash-spaces.
\end{theo}

\begin{rema}
	The non-dense case presumably boils down to this one by inducing in stages, with the closure $\overline H$ of $H$ as intermediate subgroup. However, the necessary generalization of \cite[2.1]{Ratiu:2022} would take us too far afield.
\end{rema}

\begin{proof}
	For this proof, \emph{identify $\LG^*$ with right-invariant $1$-forms on $G$} by declaring
	\begin{equation}
		\label{right_invariant_form}
		\mu(\delta q) := \<\mu,\delta q.q\inv\>
		\rlap{\qquad\quad$(\mu\in\LG^*, \delta q\in T_qG)$}
	\end{equation}
	(notation \ref{concise}). This satisfies $R_g^*\mu=\mu$ for all $g\in G$, where $R_g(q)=qg$, and any right-invariant $1$-form is so obtained. Next, recall once more that $H$ is canonically a Lie group, with Lie algebra $\LH=\{Z\in\LG:\e{tZ}\in H \text{ for all } t\in\RR\}$ (see \cite[III.4.5]{Bourbaki:1972} or \cite[9.6.13]{Hilgert:2012a}). A key property in our case is that
	\begin{equation}
		\label{G_normalizes_h}
		\text{$G$ normalizes $\LH$:\qquad}
		g\LH g\inv = \LH
		\quad\text{for all }
		g\in G.
	\end{equation}
	Indeed one knows that the normalizer $N_G(\LH)$ is always a closed subgroup containing $H$ \cite[III.9.4, Prop.~10]{Bourbaki:1972}, so it must be $G$ by our density assumption. Now let $X=G/H$, and write $\Pi$ for the projection $G\to G/H$, $\Pi(q)=qH$. 
	
	Step 1 in Iglesias-Zemmour's definition of $T^*(X)$ is to form the space $\Omega^1(X)$ of all diffeological $1$-forms on $X$ \cite[6.28]{Iglesias-Zemmour:2013}. In our case, it is remarkably small and in fact of finite dimension (which we'll see is $\dim(\LG/\LH)$), because we have
	\begin{equation}
		\label{pull_back_to_ann_h}
		\alpha\in\Omega^1(X)
		\qquad\Rightarrow\qquad
		\Pi^*\alpha = \mu
		\quad\text{for some }
		\mu\in\ann(\LH)\subset\LG^*
	\end{equation}
	(notation \ref{cardinal}b, \ref{right_invariant_form}). Indeed, the relation $\Pi\circ R_h=\Pi$ implies $R_h^*\Pi^*\alpha=\Pi^*\alpha$ for all $h\in H$, and since $H$ is dense, the same follows for all $g\in G$: thus $\Pi^*\alpha$ is right-invariant and hence of the form \eqref{right_invariant_form}. To see that $\mu$ must annihilate~$\LH$, note that if $Z\in\LH$ and we put $Q(t)=\e{tZ}$, then $P:=\Pi\circ Q:\RR\to G/H$ is the constant plot $t\mapsto eH$, whence we have $Q^*\mu = P^*\alpha = 0$ \cite[Exerc.~96]{Iglesias-Zemmour:2013}. On the other hand, a straightforward calculation gives $Q^*\mu = \<\mu,Z\>dt$: so we conclude that indeed $\<\mu,Z\>=0$ for all $Z\in\LH$.
	
	Next we claim that, conversely to \eqref{pull_back_to_ann_h}, \emph{every} $\mu\in\ann(\LH)$ is the pull-back of some $\alpha\in\Omega^1(X)$. Indeed, by the same criterion already invoked in \eqref{desired_equal_pull_backs}, this will follow if we show that given any two plots $P:U\to G$ and $Q:U\to G$ with $\Pi\circ P=\Pi\circ Q$, we have $P^*\mu = Q^*\mu$. But $\Pi\circ P=\Pi\circ Q$ means that $R(u):= P(u)\inv Q(u)$ defines a plot $R:U\to H$. Now $(g,gh,h):=(P(u), Q(u), R(u))$ are ordinary smooth functions of $u$, and given $\delta u\in T_uU$ we may compute
	\begin{equation}
		\begin{aligned}
			(Q^*\mu)(\delta u)
			&=\mu(\delta[gh])\\
			&=\mu(\delta g.h + g\delta h)\\
			&=\<\mu,[\delta g.h + g\delta h](gh)\inv\>
			&&\text{by \eqref{right_invariant_form}}\\
			&=\<\mu,\delta g.g\inv + g\delta h.h\inv g\inv\>\\
			&=\<\mu,\delta g.g\inv\>
			&&\text{by (\ref{G_normalizes_h}, \ref{pull_back_to_ann_h})}\\
			&=(P^*\mu)(\delta u).
		\end{aligned}
	\end{equation}
	This establishes our claim that $\Pi^*$ is onto $\ann(\LH)$. As $\Pi$ is a subduction, $\Pi^*$ is also one-to-one \cite[6.39]{Iglesias-Zemmour:2013}, hence we have a (linear) bijection $\Omega^1(X)\to\ann(\LH)$. Writing $\Pi_*$ for its inverse, we summarize the argument so far in the following statement (which has since been generalized in \cite{Clark:2024a}):
	\begin{prop}
		\label{Pi_*}
		If $H$ is dense in $G$\textup, then $\Omega^1(G/H) = \Pi_*(\ann(\LH))$.\qed
	\end{prop}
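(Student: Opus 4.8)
The plan is to prove the proposition by showing that the pull-back map $\Pi^*$ is a linear \emph{bijection} from $\Omega^1(G/H)$ onto $\ann(\LH)$, the latter identified via \eqref{right_invariant_form} with those right-invariant $1$-forms on $G$ that kill $\LH$; the proposition is then exactly the assertion that its inverse $\Pi_*$ carries $\ann(\LH)$ back onto all of $\Omega^1(G/H)$. Since $\Pi:G\to G/H$ is a subduction, injectivity of $\Pi^*$ is automatic from \cite[6.39]{Iglesias-Zemmour:2013}, so the real content lies in identifying its image.

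First I would check that $\Pi^*$ lands inside $\ann(\LH)$, which is the implication \eqref{pull_back_to_ann_h}: the relation $\Pi\circ R_h=\Pi$ makes $\Pi^*\alpha$ invariant under all $R_h$ $(h\in H)$, density of $H$ upgrades this to invariance under all $R_g$ $(g\in G)$, so $\Pi^*\alpha$ is right-invariant and equals some $\mu\in\LG^*$; and evaluating $\alpha$ along $t\mapsto\e{tZ}$, which $\Pi$ collapses to a point, forces $\<\mu,Z\>=0$ for $Z\in\LH$.

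The crux is the reverse inclusion: every $\mu\in\ann(\LH)$ \emph{descends}. Here I would apply Souriau's criterion, just as in \eqref{desired_equal_pull_backs}, reducing the claim to $P^*\mu=Q^*\mu$ for arbitrary plots $P,Q:U\to G$ with $\Pi\circ P=\Pi\circ Q$. The key point is that $R(u):=P(u)\inv Q(u)$ is then a plot valued in the immersed Lie group $H$, smooth into $H$ by~(\ref{principal}a). Writing $g=P(u)$ and $h=R(u)$ and differentiating the identity $Q=gh$, the product rule together with \eqref{right_invariant_form} expresses $(Q^*\mu)(\delta u)$ as $\<\mu,\delta g.g\inv + g\,\delta h.h\inv g\inv\>$; the second summand lies in $g\LH g\inv=\LH$ by \eqref{G_normalizes_h} and is therefore killed by $\mu$, leaving exactly $(P^*\mu)(\delta u)$. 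Combining this with the previous inclusion and with injectivity shows $\Pi^*$ is a bijection, and $\Pi_*=(\Pi^*)\inv$ then delivers $\Omega^1(G/H)=\Pi_*(\ann(\LH))$.

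I expect the surjectivity step to be the main obstacle, and within it the essential role of \emph{density}, which enters twice and cannot be dispensed with: once to promote $H$-invariance to $G$-invariance in \eqref{pull_back_to_ann_h}, and once to secure the normalizer identity \eqref{G_normalizes_h} (valid because $N_G(\LH)$ is closed and contains the dense $H$, so equals $G$). The subtlety is that $G/H$ is not a manifold, so one cannot verify descent pointwise and must instead feed both density consequences into the plot-level criterion; the smoothness of $R$ as a map into the immersed Lie group $H$ is the one technical point I would take care to justify rather than silently assume.
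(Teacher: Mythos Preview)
Your proposal is correct and follows essentially the same route as the paper: the paper establishes \eqref{pull_back_to_ann_h} by the density-upgrade of right-invariance and the test curve $Q(t)=\e{tZ}$, then proves descent of each $\mu\in\ann(\LH)$ via Souriau's criterion and the computation $(Q^*\mu)(\delta u)=\<\mu,\delta g.g\inv+g\,\delta h.h\inv g\inv\>$ together with \eqref{G_normalizes_h}, and concludes by injectivity of $\Pi^*$. Your explicit invocation of (\ref{principal}a) to justify that $R:U\to H$ is a plot of the immersed Lie group $H$ is a welcome clarification the paper leaves implicit.
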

	
	\begin{proof}[Proof of \eqref{diffeological_KMS}\,\textup(continued\textup)] In Step 2 of his construction, \cite[6.40]{Iglesias-Zemmour:2013} defines the cotangent space at $x\in X$ as the quotient (also denoted $\Lambda^1_x(X)$) 
	\begin{equation}
		\label{cotangent_space_at_x}
		T^*_x(X) = \Omega^1(X)/\{1\textrm{-forms vanishing at }x\},
	\end{equation}
	where $\alpha\in\Omega^1(X)$ is said to \emph{vanish at} $x$ if, whenever $P:U\to X$ is a plot with $0\in U$ and $P(0) = x$, the ordinary $1$-form $P^*\alpha\in\Omega^1(U)$ vanishes at $0$. Now in our case \eqref{pull_back_to_ann_h}, we claim: \emph{$\alpha = \Pi_*(\mu)$ vanishes nowhere unless $\mu=0$}. Indeed, given $x=qH\in X$ and $Z\in\LG$, put $Q(t)=\e{tZ}q$. Then one checks readily that the plot $P=\Pi\circ Q:\RR\to X$ has $P(0)=x$ and $P^*\alpha = \<\mu,Z\>dt$. The only way this can vanish at $t=0$ for all $Z\in\LG$ is if $\mu=0$, so we have our claim.
	
	Step 3 of \cite[6.45,~6.48]{Iglesias-Zemmour:2013} defines $T^*(X)$ (also denoted $\mathbf{\Lambda}^1(X)$) as $\{(x,a): x\in X, a\in T^*_x(X)\}$. For us \eqref{cotangent_space_at_x} has no denominator, so this is simply
	\begin{equation}
		\label{cotangent_space}
		\begin{aligned}
			T^*(X) &= X\times\Omega^1(X)\\
			&= (G/H)\times\Pi_*(\ann(\LH))\rlap{\qquad by \eqref{Pi_*}.}
		\end{aligned}
	\end{equation}
	
	Step 4 of \cite[6.49]{Iglesias-Zemmour:2013} next defines the \emph{Liouville $1$-form} on $T^*(X)$ as follows. Recall that a differential form on a diffeological space is defined by specifying its pull-back by every plot of the space. Now the plots of \eqref{cotangent_space} have the form $P\times A$ for plots $P:U\to X$ and $A:U\to\Omega^1(X)$; the definition, then, is:
	\begin{equation}
		\label{Liouv}
		((P\times A)^*\Liouv)(\delta u)
		= P^*(A(u))(\delta u)
		\rlap{\qquad($\delta u\in T_uU$).}
	\end{equation}
	(In general this defines a $1$-form `Taut' on $X\times\Omega^1(X)$, which then descends as Liouv on the quotient $T^*(X)$. For us there is no division, so Liouv $=$~Taut.)
	
	Step 5 of \cite[6.52]{Iglesias-Zemmour:2013} defines a $\Liouv$-preserving action of $G$ (or $\operatorname{Diff}(X)$) on \eqref{cotangent_space} by $g(x,\alpha)=(g(x),g_*(\alpha))$. As our action is by left translations and $\Pi^*\alpha$ is right-invariant, we may as well push it forward by $g(\,\cdot\,)g\inv$; so we get
	\begin{equation}
		\label{lifted_action}
		y=(\Pi(q),\Pi_*(\mu))
		\quad\Rightarrow\quad
		g(y) = (\Pi(gq),\Pi_*(g\mu g\inv)).
	\end{equation}
	
	Finally Step 6 of \cite[9.11]{Iglesias-Zemmour:2013} defines the \emph{moment map} $\Phi:T^*(X)\to\LG^*$ by the following formula, where $y\in T^*(X)$ and 
	$\hat y:G\to T^*(X)$ is the map $g\mapsto g(y)$:
	\begin{equation}
		\label{cotangent_moment}
		\Phi(y) =
		\text{value at $e$ of the $1$-form }
		\hat y^*\Liouv\in\Omega^1(G).
	\end{equation}
	
	It is now easy to compare (\ref{cotangent_space}--\ref{cotangent_moment}) with (\ref{KMS_space}, \ref{reduced_form_on_induced_spaces}--\ref{parasymplectic_Hamiltonian}). Indeed, a glance at \eqref{phi_and_psi} reveals that $\IND HG\,\{0\}$ is the quotient of
	\begin{equation}
		\label{level}
		\begin{aligned}
			\psi\inv(0)
			&=\{p=\mu q: q\in G, \mu\in q\ann(\LH)q\inv\}\\
			&=\{p=\mu q: q\in G, \mu\in \ann(\LH)\}
			\rlap{\qquad\text{by \eqref{G_normalizes_h}}}\\
			&\cong \,G\times\ann(\LH)
		\end{aligned}
	\end{equation}
	(notation \ref{concise}) by the $H$-action, where we note that under this last identification, the $G\times H$-action \eqref{s-induced} and the injection $j:\psi\inv(0)\hookrightarrow T^*G$ become
	\begin{equation}
		\label{trivialization}
		(g,h)(q,\mu)=(gqh\inv,g\mu g\inv)
		\qquad\text{and}\qquad
		j(q,\mu)=\mu q.
	\end{equation}
	Now clearly $F(q,\mu) := (\Pi(q),\Pi_*(\mu))$ defines a map from \eqref{level} to \eqref{cotangent_space} which is $G$-equivariant \eqref{lifted_action} and descends to a diffeomorphism $\psi\inv(0)/H\to T^*(G/H)$. There remains to see that
	\begin{equation}
		\label{pull_backs}
	   \text{(a) }\ F^*\Liouv = j^*\varpi_{T^*G}
	   \qquad\text{and}\qquad
	   \text{(b) }\ \Phi\circ F=\phi\circ j
	\end{equation}
	with $\phi$ as in \eqref{phi_and_psi}. Now (a) means that $(Q\times M)^*F^*\Liouv = (Q\times M)^*j^*\varpi_{T^*G}$ for all plots $Q\times M:U\to G\times\ann(\LH)$. To prove this, note that the left-hand side is $\Liouv$'s pull-back by $F\circ(Q\times M) = (\Pi\circ Q)\times(\Pi_*\circ M)$, which we can take as $P\times A$ in \eqref{Liouv}. Thus we obtain
	\begin{equation}
		\begin{aligned}
			((Q\times M)^*F^*\Liouv)(\delta u)
			&= (\Pi\circ Q)^*((\Pi_*\circ M)(u))(\delta u)
			&&\text{by \eqref{Liouv}}\\
			&= Q^*(\Pi^*(\Pi_*(M(u))))(\delta u)\\
			&= Q^*(M(u))(\delta u)\\
			&= M(u)(\delta[Q(u)])\\
			&= \<M(u),\delta[Q(u)]Q(u)\inv\>
			&&\text{by \eqref{right_invariant_form}}\\
			&= \<M(u)Q(u),\delta[Q(u)]\>
			&&\text{by \eqref{concise}}\\
			&=\varpi_{T^*G}(\delta[M(u)Q(u)])\\
			&=\varpi_{T^*G}(\delta[j(Q(u),M(u))])
			&&\text{by \eqref{trivialization}}\\
			&= ((Q\times M)^*j^*\varpi_{T^*G})(\delta u)
		\end{aligned}
	\end{equation}
	as desired. To prove (\ref{pull_backs}b), note that $(\phi\circ j)(q,\mu)=\phi(\mu q)=\mu$ \eqref{phi_and_psi}, whereas if $y=F(q,\mu)$, then $G$-equivariance gives $\hat y = F\circ(q,\mu)\hat{\phantom.}$ where $(q,\mu)\hat{\phantom.}(g)=g(q,\mu)$. So \eqref{cotangent_moment} says that $(\Phi\circ F)(q,\mu)$ is the value at $e$ of the (\emph{left}-invariant) 1-form
	\begin{equation}
		\begin{aligned}
			(\hat y^*\Liouv)(\delta g)
			&=((q,\mu)\hat{\phantom.}^*F^*\Liouv)(\delta g)\\
			&=((q,\mu)\hat{\phantom.}^*j^*\varpi_{T^*G})(\delta g)
			&&\text{by (\ref{pull_backs}a)}\\
			&=\varpi_{T^*G}(\delta[j(g(q,\mu))])\\
			&=\varpi_{T^*G}(\delta[g\mu q])
			&&\text{by \eqref{trivialization}}\\
			&=\<g\mu q,\delta g.q\>\\
			&=\<\mu,g\inv\delta g\>
			&&\text{by \eqref{concise},}
		\end{aligned}
	\end{equation}
	i.e.~$(\Phi\circ F)(q,\mu)=\mu$. So both sides of (\ref{pull_backs}b) are equal, as claimed.
\end{proof}\let\qed\relax
\end{proof}

\bookmarksetup{startatroot}
\section*{Acknowledgements}
The first named author would like to thank Mauro Spera for invaluable guidance and help in the initial stages of this work. His research is carried out within INDAM-GNSAGA's framework.

\let\i\dotlessi
\let\l\polishl
\let\o\norwegiano
\let\u\russianbreve

\bibliographystyle{amsalpha}

\end{document}